\newcommand{\R}{\mathbb{R}}
\newtheorem{theorem}{Theorem}
\newtheorem{corollary}{Corollary}
\newtheorem{lemma}{Lemma}
\newtheorem{remark}{Remark}
\begin{document}

%\title{Observer-based control of reaction-diffusion systems under delayed boundary sensing and actuation}
\title{Finite-dimensional
control of the heat equation: Dirichlet actuation and point measurement}

\author{Rami~Katz and
Emilia~Fridman,~\IEEEmembership{Fellow,~IEEE}
\thanks{R. Katz ({\tt\small rami@benis.co.il}) and E. Fridman ({\tt\small  emilia@eng.tau.ac.il}) are with the School of Electrical Engineering, Tel Aviv University, Israel. }
\thanks{Supported by  Israel Science Foundation (grant no. 673/19) and
by Chana and Heinrich Manderman Chair at Tel Aviv University.}%
}
\markboth{}%
{}

\maketitle

\begin{abstract}
%\textcolor{red}{ABSTRACT}
%Finite-dimensional observer-based control for PDEs is challenging.
Recently %,  such controllers
finite-dimensional observer-based controllers were introduced for the 1D heat equation, where at least one of the observation or control
operators is bounded. In this paper, for the first time, we manage with such controllers for the 1D heat equation with both operators being unbounded. We consider
Dirichlet actuation and point measurement and use a modal decomposition approach via dynamic extension.
We suggest a direct Lyapunov approach to the full-order closed-loop system, where the finite-dimensional state is coupled with the infinite-dimensional tail of the state Fourier expansion, and provide LMIs for
finding the controller dimension and the resulting exponential decay rate. We further study sampled-data implementation of the controller under sampled-data measurement.
We use Wirtinger-based, discontinuous in time, Lyapunov functionals which compensate sampling in the finite-dimensional state. To compensate sampling in the infinite-dimensional tail, we use a novel form of Halanay's inequality, which is appropriate
for Lyapunov functions with jump discontinuities that do not grow in the jumps. Numerical examples demonstrate the efficiency of the method.
\end{abstract}
\begin{keywords} Distributed parameter systems, %parabolic systems, %
boundary control, 
sampled-data control, observer-based control
%%\textcolor{red}{Parabolic PDEs, boundary control, observer-based control, modal decomposition, LMI.}
\end{keywords}

\section{Introduction}
Finite-dimensional observer-based control for PDEs is attractive for applications and theoretically challenging.
Such controllers for parabolic systems were designed by the modal decomposition approach in \cite{balas1988finite, christofides2001, curtain1982finite,harkort2011finite}.
The existing results are mostly restricted to bounded control and observation operators, whereas efficient bounds on the observer and controller dimensions are missing.
Thus, the bound suggested in \cite{harkort2011finite} appeared to be highly conservative and difficult to compute.

In our recent paper \cite{RamiContructiveFiniteDim}, the first constructive LMI-based method for finite-dimensional observer-based controller for the 1D heat equation was suggested,
where the controller dimension and the resulting exponential decay rate were found from simple LMI conditions. Robustness of the finite-dimensional controller with respect to input and output delays was studied in \cite{katz2020constructiveDelay}. However, the results
of \cite{RamiContructiveFiniteDim,katz2020constructiveDelay} were confined to cases where at least
one of the observation or control operators is bounded. Sampled-data and delayed boundary control of 1D heat equation under boundary measurement was studied in \cite{katz2020boundary} by using an infinite-dimensional PDE observer. However, finite-dimensional observer-based control of the heat equation in the challenging case where both operators are unbounded remained open. Note that finite-dimensional observer-based control of the 1D linear Kuramoto-Sivashinsky equation (KSE) with both observation and control operators unbounded was studied in \cite{Rami_CDC20}.

In the present paper, for the first time, we manage with finite-dimensional observer-based controllers for the 1D heat equation with both operators unbounded. We consider
Dirichlet actuation and point measurement and employ a modal decomposition approach via dynamic extension.
We suggest a direct Lyapunov approach to the full-order closed-loop system, where the finite-dimensional state is coupled with the infinite-dimensional tail of the state Fourier expansion, and provide LMIs for finding the controller dimension and resulting exponential decay rate. In order to manage with point measurement, we consider $H^1$-stability and apply the Cauchy-Schwarz inequality in a novel form (with fractional powers of the eigenvalues of a Sturm-Liouville operator). Note that for KSE, studied in \cite{Rami_CDC20}, the use of fractional powers of the eigenvalues was not required. We further study sampled-data implementation of the controller under sampled-data measurement, where we consider independent variable samplings of the output and input.
Sampled-data finite-dimensional controllers implemented by zero-order hold devices were suggested in \cite{Aut12,NetzerAut14,kang2018distributed,selivanov2019delayed} for distributed static output-feedback control, \cite{karafyllis2018sampled,karafyllis2017sampledTransport} for boundary state-feeback and in
\cite{katz2020boundary,katz2020constructiveDelay} for observer-based control. Event-triggered sampled-data control of parabolic and hyperbolic PDEs has been studied in \cite{kang2020event,espitia2020event,espitia2020observer}. Due to dynamic extension, in the present paper we suggest sampled-data implementation via a generalized hold device (see e.g. \cite{mirkin2016intermittent} and references therein).
We use Wirtinger-based discontinuous in time Lyapunov functionals which compensate
sampling in the finite-dimensional state and lead to the simplest efficient stability conditions for ODEs \cite{Wir_Aut12,selivanov2016observer}. To compensate sampling in the infinite-dimensional tail, we use a novel form of Halanay's inequality, which is appropriate
for Lyapunov functions with jump discontinuities that do not grow in the jumps. Numerical examples show the efficiency of the proposed method.
\subsection{Notations and mathematical preliminaries}
We denote by $ L^2(0,1)$ the Hilbert space of Lebesgue measurable and square integrable functions $f:[0,1]\to \mathbb{R} $  with the inner product $\left< f,g\right>:=\scriptsize{\int_0^1 f(x)g(x)dx}$ and induced norm $\left\|f \right\|^2:=\left<f,f \right>$.
$H^{k}(0,1)$ is the Sobolev space of  functions  $f:[0,1]\to \mathbb{R} $ having $k$ square integrable weak derivatives, with the norm $\left\|f \right\|^2_{H^k}:=\sum_{j=0}^{k} \left\|f^{(j)} \right\|^2$. The Euclidean norm on $\mathbb{R}^n$ will be denoted by $\left|\cdot \right|$. We denote $f\in H^1_0(0,1)$ if $f\in H^1(0,1)$ and $f(0)=f(1)=0$. For  $P \in \mathbb{R}^{n \times n}$, the notation $P>0$ means that $P$ is symmetric and positive definite. The norm of a matrix $A$ is denoted by $\left|A\right|$. The sub-diagonal elements of a symmetric matrix will be denoted by $*.$ For $U\in \mathbb{R}^{n\times n}, \ U>0$ and $x\in \mathbb{R}^n$ we denote $\left|x\right|^2_U=x^TUx$.

Consider the Sturm-Liouville eigenvalue problem
\begin{equation}\label{eq:SL}
	\begin{aligned}
		\phi''+\lambda \phi = 0,\ \ x\in [0,1]
	\end{aligned}
\end{equation}
with the following boundary conditions:
\begin{equation}\label{eq:2BCs}
	\begin{array}{lll}
		& \phi(0) = \phi(1) = 0.
	\end{array}
\end{equation}
%In each of the cases, \eqref{eq:SL}
This problem induces a sequence of eigenvalues with corresponding eigenfunctions. The eigenfunctions form a complete orthonormal system in $L^2(0,1)$. The eigenvalues and corresponding eigenfunctions are given by
\begin{equation}\label{eq:SLBCs}
	\begin{array}{lll}
		& \phi_n(x)=\sqrt{2}\sin\left(\sqrt{\lambda_n} x\right),\  \lambda_n = n^2\pi^2, \ n\geq 1.
	\end{array}
\end{equation}

The following lemma will be used:
%(the proof follows arguments in \cite{RamiContructiveFiniteDim}):
\begin{lemma}[\cite{RamiContructiveFiniteDim}]\label{lem:H1Equiv}
Let $h\in L^2(0,1)$ satisfy $h \overset{L^2(0,1)}{=} \sum_{n=1}^{\infty}h_n\phi_n$. Then $h\in H^1_0(0,1)$ if and only if $\sum_{n=1}^{\infty}\lambda_nh_n^2< \infty$. Moreover,
\begin{equation}\label{lem22}
\left\|h' \right\|^2=\sum_{n=1}^{\infty}\lambda_nh_n^2.
\end{equation}
\end{lemma}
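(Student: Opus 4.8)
The plan is to exploit the fact that differentiating the sine series $\sum_n h_n\phi_n$ term by term produces a cosine series, and that $\{1\}\cup\{\sqrt{2}\cos(n\pi x):n\ge1\}$ is a complete orthonormal system in $L^2(0,1)$ (the Neumann eigenbasis). The bridge between the two expansions is integration by parts, which is exactly where the Dirichlet conditions $\phi_n(0)=\phi_n(1)=0$ from \eqref{eq:2BCs} enter. Throughout I use the one-dimensional embedding $H^1(0,1)\hookrightarrow C[0,1]$, so that the characterization $H^1_0(0,1)=\{f\in H^1(0,1):f(0)=f(1)=0\}$ may be used freely.

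For the implication $h\in H^1_0(0,1)\Rightarrow\sum_n\lambda_n h_n^2<\infty$ together with \eqref{lem22}: since $h'\in L^2(0,1)$, I compute its cosine-Fourier coefficients. Integration by parts and $h(0)=h(1)=0$ give, for every $n\ge1$,
\[
\langle h',\sqrt{2}\cos(n\pi x)\rangle=n\pi\,\langle h,\sqrt{2}\sin(n\pi x)\rangle=\sqrt{\lambda_n}\,h_n ,
\]
the boundary term $\bigl[\sqrt{2}\,h(x)\cos(n\pi x)\bigr]_{0}^{1}$ vanishing, and likewise $\langle h',1\rangle=h(1)-h(0)=0$. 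Hence the cosine-Fourier coefficients of $h'$ are $0$ and $\{\sqrt{\lambda_n}h_n\}_{n\ge1}$, and Parseval's identity yields $\|h'\|^2=\sum_{n\ge1}\lambda_n h_n^2$, which is finite and equals the right-hand side of \eqref{lem22}.

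For the converse, assume $\sum_n\lambda_n h_n^2<\infty$ and set $h^N:=\sum_{n=1}^N h_n\phi_n\in H^1_0(0,1)$, so $(h^N)'=\sum_{n=1}^N\sqrt{\lambda_n}h_n\,\sqrt{2}\cos(n\pi x)$. Since $\sum_n h_n^2<\infty$ (as $h\in L^2$) and $\sum_n\lambda_n h_n^2<\infty$ by hypothesis, orthonormality of the sine and cosine systems shows that for $M>N$ one has $\|h^M-h^N\|_{H^1}^2=\sum_{n=N+1}^{M}(1+\lambda_n)h_n^2\to0$, i.e. $(h^N)_N$ is Cauchy in $H^1(0,1)$. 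By completeness it converges in $H^1(0,1)$, necessarily to $h$ (which is its $L^2$-limit); and since $H^1_0(0,1)$ is closed in $H^1(0,1)$ and each $h^N\in H^1_0(0,1)$, we get $h\in H^1_0(0,1)$. Finally $(h^N)'\to h'$ in $L^2(0,1)$, while $(h^N)'\to\sum_{n\ge1}\sqrt{\lambda_n}h_n\sqrt{2}\cos(n\pi x)$ in $L^2(0,1)$; these two limits coincide, so $\|h'\|^2=\sum_n\lambda_n h_n^2$, which is \eqref{lem22}.

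I expect the only delicate point to be identifying the weak derivative of $h$ with the formally differentiated series in the converse direction; this is handled by completeness of $H^1(0,1)$ and continuity of its embedding into $L^2(0,1)$ (equivalently, by closedness of $d/dx$ on $L^2(0,1)$ with domain $H^1(0,1)$, or by testing against $C_c^\infty(0,1)$ and passing to the $L^2$-limit). The remaining ingredients are just integration by parts and Parseval's identity.
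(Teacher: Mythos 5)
Your proof is correct. Note that this paper does not actually prove the lemma---it is quoted from \cite{RamiContructiveFiniteDim}---so there is no in-paper argument to compare against; the standard proof there rests on the same ingredients you use, namely the spectral/Parseval characterization of $\|h'\|^2$ through the Dirichlet eigenbasis (equivalently, the identity $\mathcal{D}(\mathcal{A}_1^{1/2})=H^1_0(0,1)$ invoked in \eqref{eq:CalARootDef}). Your two directions are sound: in the forward direction the integration by parts against the Neumann cosine basis is legitimate because $H^1(0,1)$ functions are absolutely continuous and the embedding $H^1(0,1)\hookrightarrow C[0,1]$ justifies using the paper's definition $H^1_0(0,1)=\{f\in H^1(0,1):f(0)=f(1)=0\}$ to kill the boundary terms, and Parseval in the complete cosine system gives \eqref{lem22}; in the converse direction the $H^1$-Cauchy estimate $\|h^M-h^N\|_{H^1}^2=\sum_{n=N+1}^{M}(1+\lambda_n)h_n^2$, closedness of $H^1_0(0,1)$ under $H^1$-convergence (continuity of point evaluation), and identification of the weak derivative with the $L^2$-limit of $(h^N)'$ are all handled properly. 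No gaps.
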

\section{Continuous-time control of a heat equation}
In this section we consider stabilization of the linear 1D heat equation
\begin{equation}\label{eq:LinearHeat}
	\begin{array}{lll}
		&z_t(x,t) = z_{xx}(x,t) +az(x,t), \ t\geq 0
	\end{array}
\end{equation}
where $x\in [0,1]$, $z(x,t)\in \mathbb{R}$ and $a\in \mathbb{R}$ is the reaction coefficient. We consider Dirichlet actuation given by
\begin{equation}\label{eq:BCsHeatDir}
	 z(0,t)=u(t), \quad z(1,t) = 0
\end{equation}
where $u(t)$ is a control input to be designed and in-domain point measurement given by
\begin{equation}\label{eq:InDomPointMeasHeat}
y(t) = z(x_*,t), \ x_*\in (0,1).
\end{equation}
Following \cite{prieur2018feedback}, we introduce the change of variables
\begin{equation}\label{eq:ChangeVarsHeatDir}
w(x,t)=z(x,t)-r(x)u(t), \quad r(x):=1-x
\end{equation}
to obtain the following equivalent ODE-PDE system
\begin{equation}\label{eq:PDE1HeatDir}
\begin{aligned}
& w_t(x,t)=w_{xx}(x,t)+aw(x,t)+ar(x)u(t)-r(x)v(t),\\
& \dot{u}(t)=v(t), \quad t\geq 0
\end{aligned}
\end{equation}
with boundary conditions
\begin{equation}\label{eq:PDE1HeatDirBCs}
\begin{array}{lll}
&w(0,t)=0, \quad w(1,t)=0.
\end{array}
\end{equation}
and measurement
\begin{equation}\label{eq:InDomPointMeas1HeatDir}
y(t) = w(x_*,t)+r(x_*)u(t).
\end{equation}
Henceforth we will treat $u(t)$ as an additional state variable and $v(t)$ as the control input. Given $v(t)$, $u(t)$ can be computed by integrating $\dot{u}(t)=v(t)$, where we choose $u(0)=0$. Note that this choice implies $z(\cdot,0)=w(\cdot,0)$.

We present the solution to \eqref{eq:PDE1HeatDir} as
\begin{equation}\label{eq:WseriesHeatDir}
\begin{array}{lll}
w(x,t) &= \sum_{n=1}^{\infty}w_n(t)\phi_n(x), \ w_n(t) =\left<w(\cdot,t),\phi_n\right>,
\end{array}
\end{equation}
with $\phi_n(x), \ n\geq 1$ defined in \eqref{eq:SLBCs}. By differentiating under the integral sign, integrating by parts and using \eqref{eq:SL} and \eqref{eq:2BCs} we obtain
\begin{equation}\label{eq:WOdesHeatDir}
\begin{array}{lll}
&\dot{w}_n(t) = (-\lambda_n+a)w_n(t) + ab_nu(t) -b_nv(t),\quad t\geq 0  \\
& b_n=\left<r,\phi_n\right>= \sqrt{\frac{2}{\lambda_n}}, \ w_n(0) = \left<w(\cdot,0),\phi_n\right>, \ n\geq 1.
\end{array}
\end{equation}
In particular note that
\begin{equation}\label{eq:AssbnNonDelayedHeatDir}
\begin{array}{lll}
&b_n \neq 0 , \quad n\geq 1
\end{array}
\end{equation}
and
\begin{equation}\label{eq:AssbnNonDelayed1HeatDir}
\sum_{n=N+1}^{\infty}b_n^2 \leq \frac{2}{\pi^2}\int_N^{\infty}\frac{dx}{x^2}=\frac{2}{\pi^2N}, \ N\geq 1.
\end{equation}
\begin{remark}
State-feedback boundary control of $1$D parabolic PDEs, without dynamic extension, has been suggested in \cite{karafyllis2018sampled}. Without dynamic extension, modal decomposition of \eqref{eq:LinearHeat} with boundary conditions \eqref{eq:BCsHeatDir} results in ODEs similar to \eqref{eq:WOdesHeatDir}, without $v(t)$, where $|b_n|\approx \lambda_n^{\frac{3}{2}}$. The growth of $\left\{b_n\right\}_{n=1}^{\infty}$ poses a problem in compensating cross terms which arise in the Lyapunov stability analysis (see \eqref{eq:WCrosTermNonDelayedHeatDir} below). As can be seen in \eqref{eq:AssbnNonDelayed1HeatDir}, the use of dynamic extension leads to $\left\{b_n\right\}_{n=1}^{\infty}\in l^2(\mathbb{N})$.
\end{remark}

Let $\delta>0$ be a desired decay rate and let $N_0 \in \mathbb{N}$ satisfy
\begin{equation}\label{eq:N0HeatDir}
-\lambda_n+a<-\delta, \quad n>N_0.
\end{equation}
Let $N\in \mathbb{N}, \ N_0\leq N$. $N_0$ will define the dimension of the controller and $N$ will define the dimension of the observer.

We construct a finite-dimensional observer of the form
\begin{equation}\label{eq:WhatSeriesHeatDir}
\hat{w}(x,t): = \sum_{n=1}^{N}\hat{w}_n(t)\phi_n(x)
\end{equation}
where $\hat{w}_n(t)$ satisfy the ODEs for $t\geq 0$:
\begin{equation}\label{eq:WobsODENonDelayedHeatDir1}
\begin{array}{lll}
&\dot{\hat{w}}_n(t) = (-\lambda_n+a)\hat{w}_n(t) +ab_nu(t)- b_nv(t)\\
&\hspace{13mm}-l_n\left[\hat{w}(x_*,t)+r(x_*)u(t)- y(t)\right],\ n\geq 1,\\
&\hat{w}_n(0)=0, \quad 1\leq n\leq N.
\end{array}
\end{equation}
with $y(t)$ in \eqref{eq:InDomPointMeas1HeatDir} and saclar observer gains $l_n,\ 1\leq n\leq N$.

\textbf{Assumption 1:} The point $x_*\in (0,1)$ satisfies
\begin{equation}\label{eq:AsscnNonDelayed}
c_n=\phi_n(x_*)=\sqrt{2}\sin\left(\sqrt{\lambda_n} x_*\right)\neq 0 , \ 1\leq n \leq N_0.
\end{equation}
Note that this assumption is satisfied if $x_*$ is irrational number. %, i.e. $x_*\in (0,1)\setminus \mathbb{Q}$.
In this case $\phi_n(x_*)\neq 0$ for all $n\in \mathbb{N}$.\\[0.1cm]
Let
\begin{equation}\label{eq:C0A0HeatDir}
\begin{array}{lll}
&A_0 = \operatorname{diag}\left\{-\lambda_1+a,\dots,-\lambda_{N_0}+a \right\},\\
&B_0 = \left[b_1,\dots,b_{N_0} \right],\ L_0 = \left[l_1,\dots,l_{N_0} \right]^T,\\
&C_0=\left[c_1,\dots,c_{N_0} \right],\ \tilde{B}_0= \left[1,-b_1,\dots,-b_{N_0} \right],\\
&\tilde{A}_0 =\begin{bmatrix}0& 0\\ aB_0&A_0 \end{bmatrix}\in \mathbb{R}^{(N_0+1)\times(N_0+1)}.\\
\end{array}
\end{equation}
Under Assumption 1 it can be verified that the pair $(A_0,C_0)$ is observable by the Hautus lemma. We choose $L_0 = \left[l_1,\dots,l_{N_0} \right]^T\in \mathbb{R}^{N_0}$ which
satisfies the Lyapunov inequality
\begin{equation}\label{eq:GainsDesignLHeatDir}
P_{\text{o}}(A_0-L_0C_0)+(A_0-L_0C_0)^TP_{\text{o}} < -2\delta P_{\text{o}},
\end{equation}
with $0<P_{\text{o}}\in \mathbb{R}^{N_0\times N_0}$. We choose $l_n=0, \ n>N_0$.\\[0.2cm]
Since $b_n \neq 0 , \ n\geq 1$ the pair $(\tilde{A}_0,\tilde{B}_0)$ is controllable. Let $K_0\in \mathbb{R}^{1\times (N_0+1)}$ satisfy
\begin{equation}\label{eq:GainsDesignKHeatDir}
\begin{aligned}
&P_{\text{c}}(\tilde{A}_0+\tilde{B}_0K_0)+(\tilde{A}_0+\tilde{B}_0K_0)^TP_{\text{c}} < -2\delta P_{\text{c}},
\end{aligned}
\end{equation}
with $0<P_{\text{c}}\in \mathbb{R}^{(N_0+1)\times (N_0+1)}$. We propose a $(N_0+1)$-dimensional controller of the form
\begin{equation}\label{eq:WContDefHeatDir}
\begin{aligned}
& v(t)= K_0\hat{w}^{N_0}(t),\\
& \hat{w}^{N_0}(t) = \left[u(t),\hat{w}_1(t),\dots,\hat{w}_{N_0}(t) \right]^T
\end{aligned}
\end{equation}
which is based on the $N$-dimensional observer \eqref{eq:WhatSeriesHeatDir}. \\[0.2cm]
For well-posedness of the closed-loop system \eqref{eq:PDE1HeatDir} and \eqref{eq:WobsODENonDelayedHeatDir1} subject to the control input \eqref{eq:WContDefHeatDir} we consider the operator
\begin{equation}\label{eq:CalADef}
\begin{array}{lll}
&\mathcal{A}_1:\mathcal{D}(\mathcal{A}_1)\subseteq L^2(0,1)\to L^2(0,1), \ \ \mathcal{A}_1w = -w_{xx},\vspace{0.1cm}\\
&\mathcal{D}(\mathcal{A}_1) = \left\{w\in H^2(0,1)| w(0)=w(1)=0\right\}.
\end{array}
\end{equation}
Since $\mathcal{A}_1$ is positive, it has a unique positive square root with domain
\begin{equation}\label{eq:CalARootDef}
\mathcal{D}\left(\mathcal{A}_1^{\frac{1}{2}}\right) \overset{\eqref{lem22}}{=} H^1_0(0,1).
\end{equation}
Let $\mathcal{H}=L^2(0,1)\times \mathbb{R}^{N+1}$ be a Hilbert space with the norm $\left\|\cdot\right\|_{\mathcal{H}}=\sqrt{\left\|\cdot\right\|+\left|\cdot\right|}$.
Defining the state $\xi(t)$ as
\begin{equation*}
\begin{array}{lll}
&\xi(t)=\text{col}\left\{w(\cdot,t),\hat{w}^N(t)\right\}, \vspace{0.1cm}\\ &\hat{w}^{N}(t)=\text{col}\left\{u(t),\hat{w}_1(t),\dots,\hat{w}_N(t)\right\}
\end{array}
\end{equation*}
by arguments of \cite{RamiContructiveFiniteDim},  it can be shown that the closed-loop system \eqref{eq:PDE1HeatDir} and \eqref{eq:WobsODENonDelayedHeatDir1} with control input \eqref{eq:WContDefHeatDir} and initial condition $w(\cdot,0)\in \mathcal{D}\left(\mathcal{A}_1^{\frac{1}{2}}\right)$ has a unique classical solution
\begin{equation}\label{eq:Classical1}
\xi \in C\left([0,\infty);\mathcal{H}\right)\cap C^1\left((0,\infty);\mathcal{H}\right)
\end{equation}
such that
\begin{equation}\label{eq:Classical2}
\xi(t) \in \mathcal{D}\left(\mathcal{A}_1\right)\times \mathbb{R}^{N+1}, \quad t>0.
\end{equation}
Let $e_n(t)$ be the estimation error defined by
\begin{equation}\label{eq:WEstErrorNonDelayed}
e_n(t) = w_n(t)-\hat{w}_n(t), \ 1\leq n \leq N.
\end{equation}
By using \eqref{eq:InDomPointMeas1HeatDir}, \eqref{eq:WseriesHeatDir} and \eqref{eq:WhatSeriesHeatDir}, the last term on the right-hand side of \eqref{eq:WobsODENonDelayedHeatDir1} can be written as
\begin{equation}\label{eq:WIntroZetaNonDelayedHeatDir}
\begin{array}{ll}
&\hat{w}(x_*,t)+r(x_*)u(t) - y(t) =-\sum_{n=1}^{N} c_ne_n(t)-\zeta(t),
\end{array}
\end{equation}
where
\begin{equation}\label{eq:zetaintegralHeatDir}
\begin{array}{ll}
&\zeta(t) = w(x_*,t)-\sum_{n=1}^N w_n(t)\phi_n(x_*)\\
&\overset{\eqref{eq:2BCs},\eqref{eq:PDE1HeatDirBCs}}{=}\int_0^{x_*}\left[w_x(x,t)-\sum_{n=1}^Nw_n(t)\phi_n'(x) \right]dx.
\end{array}
\end{equation}
Then the error equations have the form
\begin{equation}\label{eq:WenHeatDir}
\begin{array}{ll}
&\dot e_n(t)=(-\lambda_n+a)e_n(t)\\
&\hspace{8mm}-l_n\left(\sum_{n=1}^{N} c_ne_n(t)+\zeta(t)\right), \quad t\geq 0.
\end{array}
\end{equation}
Note that $\zeta(t)$ satisfies the following estimate:
\begin{equation}\label{eq:zetaestHeatDir}
\begin{array}{lll}
&\zeta^2(t)\overset{\eqref{eq:zetaintegralHeatDir}}{\leq} \left\|w_x(\cdot,t)-\sum_{n=1}^Nw_n(t)\phi_n'(\cdot) \right\|^2 \\
&\hspace{9mm}\overset{\eqref{lem22}}{\leq} \sum_{n=N+1}^{\infty}\lambda_nw_n^2(t).
\end{array}
\end{equation}
Let
\begin{equation}\label{eq:ErrDefNonDelayedHeatDir}
\begin{array}{lllllll}
&e^{N_0}(t)=\left[e_1(t),\dots,e_{N_0}(t) \right],\\
&e^{N-N_0}(t)=\left[e_{N_0+1}(t),\dots,e_{N}(t) \right]^T,\\
&\hat{w}^{N-N_0}(t)=\left[\hat{w}_{N_0+1}(t),\dots,\hat{w}_{N}(t) \right]^T,\\
& X(t) = \text{col}\left\{\hat{w}^{N_0}(t),e^{N_0}(t),\hat{w}^{N-N_0}(t), e^{N-N_0}(t) \right\},
\end{array}
\end{equation}
and
\begin{equation}\label{eq:ErrDefNonDelayedHeatDir1}
\begin{array}{lllllll}
& A_1 = \operatorname{diag}\left\{-\lambda_{N_0+1}+a,\dots,-\lambda_{N}+a \right\},\\
&B_1 =\left[b_{N_0+1},\dots,b_N \right]^T,\ C_1 =\left[c_{N_0+1},\dots, c_N \right],\\
&\mathrm{a}=\begin{bmatrix}-a,0_{1\times N_0} \end{bmatrix},\ \tilde{L}_0 = \text{col}\left\{0_{1\times 1},L_0 \right\}\\
& \tilde{K}_0 = \begin{bmatrix} K_0+\mathrm{a},&0_{1\times (2N-N_0)}\end{bmatrix},\\
&\mathcal{L}=\text{col}\left\{\tilde{L}_0,-L_0,0_{2(N-N_0)\times 1}\right\},\\
& F = \scriptsize\begin{bmatrix}\tilde{A}_0+\tilde{B}_0K_0 & \tilde{L}_0C_0 & 0 &\tilde{L}_0C_1 \\ 0 & A_0-L_0C_0 & 0 & -L_0C_1\\ -B_1\left(K_0+\mathrm{a}\right) & 0 & A_1 & 0\\
0 & 0 & 0 & A_1 \end{bmatrix}.
\end{array}
\end{equation}
From \eqref{eq:WOdesHeatDir}, \eqref{eq:WobsODENonDelayedHeatDir1},  \eqref{eq:WContDefHeatDir} and \eqref{eq:ErrDefNonDelayedHeatDir1} we have the closed-loop system for $t\geq 0$:
\begin{equation}\label{eq:ClosedLoopHeatDir1}
\begin{array}{lll}
&\dot{X}(t) = FX(t)+\mathcal{L}\zeta(t),\\
& \dot{w}_n(t) = (-\lambda_n+a)w_n(t) -b_n\tilde{K}_0X(t), \ n>N.
\end{array}
\end{equation}
For $H^1$-stability analysis of the closed-loop system
\eqref{eq:ClosedLoopHeatDir1} we define the Lyapunov function
\begin{equation}\label{eq:VNonDelayedHeatDir1}
V(t)=\left|X(t)\right|^2_P+\sum_{n=N+1}^{\infty}\lambda_n w_n^2(t),
\end{equation}
where $P\in \mathbb{R}^{(2N+1)\times (2N+1)}$ satisfies $P>0$. This function is chosen to compensate $\zeta(t)$ using the estimate \eqref{eq:zetaestHeatDir}. Differentiating $V(t)$ along the solution to \eqref{eq:ClosedLoopHeatDir1} gives
\begin{equation}\label{eq:WStabAnalysisNonDelayedHeatDir1}
\begin{array}{lll}
&\hspace{-3mm}\dot{V}+2\delta V = X^T(t)\left[PF +F^TP+2\delta P\right]X(t)\\
&\hspace{-3mm}-2X^T(t)P\mathcal{L}\zeta(t)+2\sum_{n=N+1}^{\infty}\left(-\lambda_n^2+(a+\delta)\lambda_n\right)w_n^2(t)\\
 &\hspace{-3mm}-2\sum_{n=N+1}^{\infty}\lambda_n w_n(t)b_n\tilde{K}_0X(t), \quad t\geq 0.
\end{array}
\end{equation}
Note that since $\lambda_n=n^2\pi^2$, similar to \eqref{eq:WOdesHeatDir} we have
 \begin{equation}\label{3over2}
\begin{array}{lll}
\sum_{n=N+1}^{\infty}\lambda_n^{-\frac{3}{4}}
\le  \pi^{-{3\over 2}} \int_N^{\infty}x^{-\frac{3}{2}}dx=\frac{2}{\sqrt{N}\pi^{3\over 2}}.
\end{array}
\end{equation}
Since $b_n=\sqrt{2\over \lambda_n}$, the Cauchy-Schwarz inequality  implies
\begin{equation}\label{eq:WCrosTermNonDelayedHeatDir}
\begin{array}{lllll}
&-2\sum_{n=N+1}^{\infty}\lambda_n w_n(t)b_n\tilde{K}_0X(t)\\
&\leq 2\sum_{n=N+1}^{\infty}\left[\lambda_n^{\frac{7}{8}}\left|w_n(t)\right|\right]\left[\sqrt{2}\lambda_n^{-\frac{3}{8}}\left|\tilde{K}_0X(t)\right|\right]\\
&\leq \frac{1}{\alpha_1} \sum_{n=N+1}^{\infty}\lambda_n^{\frac{7}{4}}w_n^2(t)\\
&+ 2\alpha_1 \left(\sum_{n=N+1}^{\infty}\lambda_n^{-\frac{3}{4}}\right)\left|\tilde{K}_0X(t)\right|^2\\
&\overset{\eqref{3over2}}{\leq} \frac{1}{\alpha_1} \sum_{n=N+1}^{\infty}\lambda_n^{\frac{7}{4}} w_n^2(t)+ \frac{4\alpha_1 }{\sqrt{N}\pi^{\frac{3}{2}}}\left|\tilde{K}_0X(t)\right|^2
\end{array}
\end{equation}
where $\alpha>0$. From monotonicity of $\lambda_n$ we have
\begin{equation}\label{eq:zetaintroducLMIs}
\begin{array}{lll}
&2\sum_{n=N+1}^{\infty}\left(-\lambda_n^2+(a+\delta)\lambda_n\right)w_n^2(t)\\ &+2\sum_{n=N+1}^{\infty}\lambda_n w_n(t)(-b_n)\tilde{K}_0X(t)\\
&\overset{\eqref{eq:WCrosTermNonDelayedHeatDir}}{\leq}2\sum_{n=N+1}^{\infty}\left(-\lambda_n^2+\frac{1}{2\alpha_1}\lambda_n^{\frac{7}{4}}+(a+\delta) \lambda_n \right)w_n^2(t)\\
&\quad + \frac{4\alpha_1 }{\sqrt{N}\pi^{\frac{3}{2}}}\left|\tilde{K}_0X(t)\right|^2\\
&\leq -2\left(\lambda_{N+1}-a-\delta-\frac{1}{2\alpha_1}\lambda_{N+1}^{\frac{3}{4}} \right)\sum_{n=N+1}^{\infty}\lambda_nw_n^2(t)\\
&\quad + \frac{4\alpha_1 }{\sqrt{N}\pi^{\frac{3}{2}}}\left|\tilde{K}_0X(t)\right|^2\\
&\overset{\eqref{eq:zetaestHeatDir}}{\leq}\! -2\!\left(\lambda_{N+1}\!-a\!-\!\delta-\frac{1}{2\alpha_1}\lambda_{N+1}^{\frac{3}{4}} \right)\zeta^2(t)+\! \frac{4\alpha_1 }{\sqrt{N}\pi^{\frac{3}{2}}}\left|\tilde{K}_0X(t)\right|^2
\end{array}
\end{equation}
provided $\lambda_{N+1}-a-\delta-\frac{1}{2\alpha_1}\lambda_{N+1}^{\frac{3}{4}}\geq 0$.\\[0.1cm]
Let $\eta(t) = \text{col}\left\{X(t),\zeta(t) \right\}$. From \eqref{eq:WStabAnalysisNonDelayedHeatDir1}, \eqref{eq:WCrosTermNonDelayedHeatDir} and \eqref{eq:zetaintroducLMIs} we obtain
\begin{equation}\label{eq:WStabResultNonDelayedHeatDir}
\begin{array}{ll}
&\dot{V}+2\delta V\leq \eta^T(t)\Psi^{(1)} \eta(t)\leq 0, \quad t\geq 0
\end{array}
\end{equation}
if
\begin{equation}\label{eq:WStabResultNonDelayedHeatDir1}
\begin{array}{lll}
&\Psi^{(1)} = \begin{bmatrix}
\Phi^1 & P\mathcal{L}\\
* & -2(\lambda_{N+1}-a-\delta)+\frac{1}{\alpha_1}\lambda_{N+1}^{\frac{3}{4}}
\end{bmatrix}<0,\\
&\Phi^{(1)} = PF+F^TP+2\delta P +\frac{4\alpha_1}{\sqrt{N}\pi^{\frac{3}{2}}}\tilde{K}_0^T\tilde{K}_0.
\end{array}
\end{equation}
By Schur complement \eqref{eq:WStabResultNonDelayedHeatDir1} holds if and only if
\begin{equation}\label{eq:WStabResultNonDelayedHeatDir2}
\begin{bmatrix}
\Phi^{(1)} & P\mathcal{L} & 0\\
* & -2(\lambda_{N+1}-a-\delta) & 1\\
* & * & -\alpha_1\lambda_{N+1}^{-\frac{3}{4}}
\end{bmatrix}<0.
\end{equation}
Note that the LMI \eqref{eq:WStabResultNonDelayedHeatDir2} has \emph{$N$-dependent}  coefficients and its dimension depends on $N$. Summarizing, we arrive at:
\begin{theorem}\label{Thm:WdynExtensionHeatDir}
Consider \eqref{eq:PDE1HeatDir} with boundary conditions \eqref{eq:PDE1HeatDirBCs}, in-domain point measurement \eqref{eq:InDomPointMeas1HeatDir}, control law \eqref{eq:WContDefHeatDir} and $w(\cdot,0)\in \mathcal{D}(\mathcal{A}_1^{\frac{1}{2}})$. Let $\delta>0$ be a desired decay rate, $N_0\in \mathbb{N}$ satisfy \eqref{eq:N0HeatDir} and $N\in \mathbb{N}$ satisfy $N_0\leq N$. Let $L_0$ and $K_0$ be obtained using \eqref{eq:GainsDesignLHeatDir}  and \eqref{eq:GainsDesignKHeatDir}, respectively. Let there exist a positive definite matrix $P\in \mathbb{R}^{(2N+1)\times (2N+1)}$ and scalar $\alpha_1>0$ which satisfy \eqref{eq:WStabResultNonDelayedHeatDir2}. Then the solution $w(x,t)$ and $u(t)$ to \eqref{eq:PDE1HeatDir} under the control law \eqref{eq:WContDefHeatDir}, \eqref{eq:WobsODENonDelayedHeatDir1} and the corresponding observer $\hat{w}(x,t)$ defined by \eqref{eq:WhatSeriesHeatDir} satisfy
\begin{equation}\label{eq:WH1StabilityHeatDir}
\begin{array}{ll}
&\left\|w(\cdot,t)\right\|^2_{H^1}+ \left|u(t) \right|^2\leq Me^{-2\delta t}\left\|w(\cdot,0)\right\|^2_{H^1},\\
&\left\|w(\cdot,t)-\hat{w}(\cdot,t)\right\|^2_{H^1}\leq Me^{-2\delta t}\left\|w(\cdot,0)\right\|^2_{H^1},
\end{array}
\end{equation}
for some constant $M>0$. Moreover, \eqref{eq:WStabResultNonDelayedHeatDir2} is always feasible for large enough $N$.
\end{theorem}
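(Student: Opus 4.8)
The plan is to prove the two assertions separately: the decay bounds \eqref{eq:WH1StabilityHeatDir} follow quickly from the differential inequality already obtained, whereas the asymptotic feasibility of \eqref{eq:WStabResultNonDelayedHeatDir2} requires an explicit choice of $P$ and $\alpha_1$. For the decay bounds, note first that by \eqref{eq:WStabAnalysisNonDelayedHeatDir1}--\eqref{eq:WStabResultNonDelayedHeatDir} and the Schur-complement equivalence of \eqref{eq:WStabResultNonDelayedHeatDir1} and \eqref{eq:WStabResultNonDelayedHeatDir2}, feasibility of \eqref{eq:WStabResultNonDelayedHeatDir2} already gives $\dot V(t)+2\delta V(t)\le 0$ along the classical solution \eqref{eq:Classical1}--\eqref{eq:Classical2}; integrating over $[\varepsilon,t]$ and letting $\varepsilon\to0^+$ (right-continuity of $V$ at $0$ being inherited, as in \cite{RamiContructiveFiniteDim}, from $w(\cdot,0)\in\mathcal{D}(\mathcal{A}_1^{1/2})$ and the analyticity of the heat semigroup) yields $V(t)\le e^{-2\delta t}V(0)$. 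I would then compare $V$ with the quantities in \eqref{eq:WH1StabilityHeatDir}: by Parseval and Lemma~\ref{lem:H1Equiv}, $\|w(\cdot,t)\|^2_{H^1}+|u(t)|^2=|u(t)|^2+\sum_{n\ge1}(1+\lambda_n)w_n^2(t)$, where for $n\le N$ the modes $w_n=\hat w_n+e_n$ are linear functions of $X(t)$ with $1+\lambda_n\le 1+\lambda_N$, while for $n>N$ one has $1+\lambda_n\le 2\lambda_n$ since $\lambda_{N+1}\ge1$; hence $\|w(\cdot,t)\|^2_{H^1}+|u(t)|^2\le c_1V(t)$, and likewise $\|w(\cdot,t)-\hat w(\cdot,t)\|^2_{H^1}\le c_1'V(t)$ using $w-\hat w=\sum_{n\le N}e_n\phi_n+\sum_{n>N}w_n\phi_n$. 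At $t=0$, the initialization $u(0)=0$, $\hat w_n(0)=0$ forces $e_n(0)=w_n(0)$, so $|X(0)|^2=\sum_{n=1}^N w_n^2(0)\le\|w(\cdot,0)\|^2$ and $V(0)\le c_2\|w(\cdot,0)\|^2_{H^1}$ by Lemma~\ref{lem:H1Equiv}; chaining the estimates gives \eqref{eq:WH1StabilityHeatDir} with $M=\max\{c_1,c_1'\}c_2$, the constants depending on $N$ and $P$ but not on the data.

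For the feasibility, I would exploit that in the variable ordering $(e^{N-N_0},e^{N_0},\hat w^{N_0},\hat w^{N-N_0})$ the matrix $F$ of \eqref{eq:ErrDefNonDelayedHeatDir1} is block lower-triangular with diagonal blocks $A_1,\ A_0-L_0C_0,\ \tilde A_0+\tilde B_0K_0,\ A_1$, each Hurwitz with decay rate strictly larger than $\delta$ by \eqref{eq:N0HeatDir}, \eqref{eq:GainsDesignLHeatDir}, \eqref{eq:GainsDesignKHeatDir}, and that its off-diagonal couplings $\tilde L_0C_0,\ \tilde L_0C_1,\ -L_0C_1,\ -B_1(K_0+\mathrm{a})$ are all $N$-uniformly bounded (recall $|B_1|^2\le\sum_{n\ge1}b_n^2<\infty$) except the two carrying $C_1=[c_{N_0+1},\dots,c_N]$, whose norm grows like $\sqrt{N-N_0}$. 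I would take $P=\operatorname{diag}\{s_1P_{\mathrm{c}},\,s_2P_{\mathrm{o}},\,s_3I_{N-N_0},\,s_4I_{N-N_0}\}$ (in the original order $X=\operatorname{col}\{\hat w^{N_0},e^{N_0},\hat w^{N-N_0},e^{N-N_0}\}$) and $\alpha_1=\lambda_{N+1}^{-1/4}$. With this $\alpha_1$, the scalar $(2,2)$-entry of $\Psi^{(1)}$ in \eqref{eq:WStabResultNonDelayedHeatDir1} equals $-2(\lambda_{N+1}-a-\delta)+\lambda_{N+1}^{-1/4}\lambda_{N+1}^{3/4}=-\lambda_{N+1}+2(a+\delta)\to-\infty$, the penalty $\frac{4\alpha_1}{\sqrt N\pi^{3/2}}\tilde K_0^T\tilde K_0$ has norm $O(1/N)$ (as $|\tilde K_0|=|K_0+\mathrm{a}|$ does not depend on $N$), and $|P\mathcal{L}|$ stays bounded because $\mathcal{L}$ is supported on the first two blocks only. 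Eliminating the $(2,2)$-entry by Schur complement reduces \eqref{eq:WStabResultNonDelayedHeatDir1} to $PF+F^TP+2\delta P+o(1)<0$; completing squares on the four couplings, the $\sqrt N$-sized terms $\tilde L_0C_1,\ -L_0C_1$ emanating from the $e^{N-N_0}$-block are absorbed by the $O(N)$ margin of the block $2s_4(A_1+\delta I)\le-2s_4(\lambda_{N_0+1}-a-\delta)I$ — this is why $s_4$ is taken of order $N$ — leaving only $O(1)$ residuals in the first two blocks, and these residuals together with the $N$-bounded couplings are absorbed into the strict Lyapunov margins of $P_{\mathrm{c}},P_{\mathrm{o}}$ from \eqref{eq:GainsDesignKHeatDir}, \eqref{eq:GainsDesignLHeatDir} by choosing $s_1,s_2,s_3$ as suitably small fixed constants. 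Since every perturbation is $o(1)$ as $N\to\infty$, the inequality holds for all large $N$, hence so does \eqref{eq:WStabResultNonDelayedHeatDir2}.

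I expect the main obstacle to be precisely this growth $|C_1|\sim\sqrt{N}$ — the modal footprint of the \emph{unbounded point measurement}, absent both in the bounded-operator setting of \cite{RamiContructiveFiniteDim} and in the KSE case of \cite{Rami_CDC20} — which forces the error-tail weight $s_4$ to scale with $N$. The verification then hinges on checking that this scaling does not spoil the boundedness of $|P\mathcal{L}|$ (it does not, since $\mathcal{L}$ lives on the first two blocks) nor the $o(1)$ size of the $\tilde K_0$- and $\zeta$-related penalties, and that the bookkeeping among $s_1,\dots,s_4$ closes against the \emph{fixed} Lyapunov margins of $P_{\mathrm{c}},P_{\mathrm{o}}$; this last item reduces to an $N$-independent quadratic inequality in $s_1$ that is feasible once $s_3$ is chosen small enough.
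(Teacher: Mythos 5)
Your argument is correct, and it splits into two parts of different character relative to the paper. The decay-bound part is essentially the paper's proof: feasibility of \eqref{eq:WStabResultNonDelayedHeatDir2} gives $\dot V+2\delta V\le 0$, hence $V(t)\le e^{-2\delta t}V(0)$ by comparison, and then $V$ is sandwiched between the $H^1$ quantities exactly as in \eqref{eq:VZeroNonDelayedHeatDir1}--\eqref{eq:VBoundBelowHeatDirichlet} (you use Parseval with the weights $1+\lambda_n$ directly where the paper invokes Wirtinger's inequality \eqref{eq:WirtingerHeatDir}; this is the same estimate in different clothing, and your observation that $u(0)=0$, $\hat w_n(0)=0$ force $|X(0)|^2\le\|w(\cdot,0)\|^2$ matches the paper's use of $u(0)=0$).

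The feasibility part, however, is a genuinely different route. The paper does not construct $P$ blockwise: it cites the argument of Theorem 3.2 of \cite{RamiContructiveFiniteDim} to get the semigroup bound \eqref{eq:FEstimateHeatDir}, $\left|e^{(F+\delta I)t}\right|\le\Lambda\sqrt N(1+t+t^2)e^{-\kappa t}$ (the $\sqrt N$ there is precisely the footprint of $|C_1|\sim\sqrt N$ that you isolate), takes $P$ as the solution of the Lyapunov equation \eqref{eq:LyapEqHeatDir} with right-hand side $-N^{-3/4}I$, deduces $|P|\le\Lambda_1 N^{1/4}$, and closes the Schur-complemented inequality with $\alpha_1=N^{-3/8}$. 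You instead build an explicit structured $P=\operatorname{diag}\{s_1P_{\mathrm c},s_2P_{\mathrm o},s_3I,s_4I\}$ with $s_4\sim N$ and $\alpha_1=\lambda_{N+1}^{-1/4}$, exploiting the block-triangular form of $F$ and absorbing the two $\sqrt N$-sized couplings $\tilde L_0C_1$, $-L_0C_1$ into the $O(N)$ margin of the $e^{N-N_0}$ block by Young's inequality; the key point, which you correctly identify, is that each such coupling links an $O(1)$-margin block to the $O(N)$-margin block, so the factor $N$ cancels and the resulting constraints on $s_1,s_2,s_3$ are $N$-independent (one should also record that $s_2$ itself must be taken small enough, or equivalently $s_4/s_2\gtrsim N$, for the $(e^{N_0},e^{N-N_0})$ coupling — your ``suitably small fixed constants'' covers this, but it is worth writing the square completions out since the four absorptions share the block margins). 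Both arguments hinge on the same two structural facts — $|c_n|\le\sqrt2$ and $\{b_n\}\in l^2(\mathbb N)$ — and both keep $|P\mathcal L|$ and the $\tilde K_0$-penalty asymptotically harmless. What your version buys is a self-contained proof with an explicit candidate $P$ (usable, e.g., to initialize the LMI solver) that makes visible where the $N$-scaling must enter; what the paper's version buys is brevity, since the $\sqrt N$ bookkeeping is delegated to the cited exponential estimate and the Lyapunov-equation bound \eqref{eq:PnormHeatDir}.
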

\begin{proof}
Feasibility of the LMI \eqref{eq:WStabResultNonDelayedHeatDir2} implies, by the comparison principle,
\begin{equation}\label{eq:ComparisonDirichletHeatDir}
V(t)\leq e^{-2\delta t}V(0), \ t\geq 0.
\end{equation}
Since $u(0)=0$, for some $M_0>0$ we have
\begin{equation}\label{eq:VZeroNonDelayedHeatDir1}
\begin{array}{l}
V(0) \overset{\eqref{lem22}}{\leq} M_0\left\|w_x(\cdot,0) \right\|^2\leq M_0\left\|w(\cdot,0)\right\|^2_{H^1}.
\end{array}
\end{equation}
By Wirtinger's inequality (\cite{Fridman14_TDS}, Section 3.10),
for $t\geq 0$,
\begin{equation}\label{eq:WirtingerHeatDir}
\left\|w_x(\cdot,t) \right\|^2\leq \left\|w(\cdot,t) \right\|^2_{H^1}\leq \pi^{-2}\left\|w_x(\cdot,t) \right\|^2.
\end{equation}	
Since $w(\cdot,t)\in \mathcal{D}(\mathcal{A}_1)$ for all $t> 0$ we have $\left\|w_x(\cdot,t) \right\|^2\overset{\eqref{lem22}}{=}\sum_{n=1}^{\infty} \lambda_nw_n^2(t)$.
Parseval's equality, \eqref{eq:WirtingerHeatDir} and monotonicity of $\left\{\lambda_n \right\}_{n=1}^{\infty}$ imply
\begin{equation}\label{eq:VBoundBelowHeatDirichlet}
\begin{array}{ll}
&V(t)\geq \sigma_{min}(P)\left|u(t) \right|^2\\
&+ \operatorname{min}\left( \frac{\sigma_{min}(P)\pi^2}{2\lambda_N},\pi^2 \right)\left\|w(\cdot,t) \right\|^2_{H^1}, \ \ t\geq 0.
\end{array}
\end{equation}
Then \eqref{eq:WH1StabilityHeatDir} follows from \eqref{eq:ComparisonDirichletHeatDir}, \eqref{eq:VZeroNonDelayedHeatDir1}, \eqref{eq:VBoundBelowHeatDirichlet} and the presentation
\begin{equation*}
w(\cdot,t)-\hat{w}(\cdot,t)= \sum_{n=1}^Ne_n(t)\phi_n(\cdot) + \sum_{n=N+1}^{\infty}w_n(t)\phi_n(\cdot).
\end{equation*}
For feasibility of \eqref{eq:WStabResultNonDelayedHeatDir2} with large enough $N$, note that \eqref{eq:AssbnNonDelayed1HeatDir} and \eqref{eq:AsscnNonDelayed} imply $\left|c_n \right|\leq\sqrt{2}, \ n\geq 1$ and $\left\{b_n\right\}_{n=1}^{\infty}\in l^2(\mathbb{N})$. Then, by arguments of Theorem 3.2 in \cite{RamiContructiveFiniteDim}, there exist some $\Lambda,\kappa>0$, independent of $N$, such that
\begin{equation}\label{eq:FEstimateHeatDir}
\left|e^{\left(F+\delta I\right)t} \right|\leq \Lambda \cdot \sqrt{N}\left(1+t+t^2\right)e^{-\kappa t}.
\end{equation}
Therefore, $P\in \mathbb{R}^{(2N+1)\times(2N+1)}$ which solves the Lyapunov equation
\begin{equation}\label{eq:LyapEqHeatDir}
P(F+\delta I)+(F+\delta I)^T=-N^{-\frac{3}{4}}I
\end{equation}
satisfies
\begin{equation}\label{eq:PnormHeatDir}
\left|P\right|\leq \Lambda_1\cdot N^{\frac{1}{4}}
\end{equation}
where $\Lambda_1>0$ is independet of $N$. We substitute \eqref{eq:LyapEqHeatDir}, $\lambda_{N+1}=\pi^2(N+1)^2$ and $\alpha = N^{-\frac{3}{8}}$ into \eqref{eq:WStabResultNonDelayedHeatDir1}. By Schur complement, we find that \eqref{eq:WStabResultNonDelayedHeatDir1} holds if and only if
\begin{equation}\label{eq:EquivLMIsHeatDir}
\begin{array}{lll}
&\hspace{-3mm}-I+ 4\pi^{-\frac{3}{4}}N^{-
\frac{1}{8}}\tilde{K}_0^T\tilde{K}_0\\
&\hspace{-3mm}+\frac{1}{2}\left( \lambda_{N+1}-a-\delta-N^{\frac{3}{8}}\pi^{\frac{3}{2}}(N+1)^{\frac{3}{2}}\right)^{-1}P\mathcal{L}\mathcal{L}^TP<0.
\end{array}
\end{equation}
Since $\lambda_{N+1}-a-\delta\approx (N+1)^2$ and $\left|\tilde{K}_0\right|$, $\left|\mathcal{L}\right|$ are independent of $N$, by taking into account \eqref{eq:PnormHeatDir} we find that \eqref{eq:EquivLMIsHeatDir} holds for large enough $N$.
\end{proof}
\begin{corollary}\label{cor:H1ConvHeatDir}
	Under the conditions of Theorem \ref{Thm:WdynExtensionHeatDir}, the following estimates hold for $z(x,t)$ given in \eqref{eq:ChangeVarsHeatDir}:
	\begin{equation}\label{eq:ZH1StabilityHeatDir}
	\begin{array}{ll}
	&\left\|z(\cdot,t)\right\|^2_{H^1}\leq Me^{-2\delta t}\left\|z(\cdot,0)\right\|^2_{H^1},\\
	&\left\|z(\cdot,t)-\hat{w}(\cdot,t)\right\|^2_{H^1}\leq Me^{-2\delta t}\left\|z(\cdot,0)\right\|^2_{H^1},
	\end{array}
	\end{equation}
	where $M>0$ is some constant.
\end{corollary}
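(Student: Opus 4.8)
The plan is to transfer the bounds of Theorem \ref{Thm:WdynExtensionHeatDir} from $w$ to $z$ using the change of variables \eqref{eq:ChangeVarsHeatDir}, namely $z(\cdot,t)=w(\cdot,t)+r(\cdot)u(t)$ with $r(x)=1-x$. The one structural fact I would exploit is that $r\in H^1(0,1)$ with $\|r\|_{H^1}$ a fixed finite constant, independent of $t$ and of the initial data, so that $\|r(\cdot)u(t)\|_{H^1}=\|r\|_{H^1}\,|u(t)|$.

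First I would apply $\|f+g\|_{H^1}^2\le 2\|f\|_{H^1}^2+2\|g\|_{H^1}^2$ with $f=w(\cdot,t)$ and $g=r(\cdot)u(t)$ to get $\|z(\cdot,t)\|_{H^1}^2\le 2\|w(\cdot,t)\|_{H^1}^2+2\|r\|_{H^1}^2|u(t)|^2$. By the first line of \eqref{eq:WH1StabilityHeatDir} (which bounds both $\|w(\cdot,t)\|_{H^1}^2$ and $|u(t)|^2$), the right-hand side is at most $2(1+\|r\|_{H^1}^2)Me^{-2\delta t}\|w(\cdot,0)\|_{H^1}^2$. Next I would invoke the normalization $u(0)=0$ built into \eqref{eq:WobsODENonDelayedHeatDir1} and the paragraph following \eqref{eq:InDomPointMeas1HeatDir}, which gives $z(\cdot,0)=w(\cdot,0)$, hence $\|w(\cdot,0)\|_{H^1}=\|z(\cdot,0)\|_{H^1}$. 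Substituting yields the first estimate in \eqref{eq:ZH1StabilityHeatDir} with a new constant $M$ absorbing the factor $2(1+\|r\|_{H^1}^2)$.

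For the second estimate I would write $z(\cdot,t)-\hat w(\cdot,t)=[w(\cdot,t)-\hat w(\cdot,t)]+r(\cdot)u(t)$ and repeat the same two-step argument, now using the second line of \eqref{eq:WH1StabilityHeatDir} for $\|w(\cdot,t)-\hat w(\cdot,t)\|_{H^1}^2$ together with the already-established decay of $|u(t)|^2$ from the first line; again $\|w(\cdot,0)\|_{H^1}=\|z(\cdot,0)\|_{H^1}$ lets me rewrite the right-hand side in terms of $z(\cdot,0)$.

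There is no substantive obstacle here; the proof is essentially a triangle-inequality bookkeeping exercise. The only points to keep an eye on are (i) that $r\notin H^1_0(0,1)$ since $r(0)=1$, so one must work with the full $H^1$-norm of $z$ rather than any Wirtinger/Poincaré-type equivalence, and (ii) confirming that every constant introduced (in particular $\|r\|_{H^1}$) is $t$-independent, so that the single exponential rate $e^{-2\delta t}$ is preserved.
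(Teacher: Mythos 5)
Your proposal is correct and matches the paper's own argument: the paper likewise splits $z(\cdot,t)=w(\cdot,t)+r(\cdot)u(t)$ (and $z-\hat w=(w-\hat w)+r\,u$) via the triangle inequality in $H^1$, invokes both lines of \eqref{eq:WH1StabilityHeatDir} together with $u(0)=0$ (hence $z(\cdot,0)=w(\cdot,0)$), and absorbs the fixed factor involving $\left\|r\right\|_{H^1}$ into the constant $M$. The only cosmetic difference is that you work with squared norms and the inequality $\left\|f+g\right\|_{H^1}^2\leq 2\left\|f\right\|_{H^1}^2+2\left\|g\right\|_{H^1}^2$, while the paper states the unsquared triangle-inequality bound \eqref{eq:ZEstimateH1HeatDir} and squares afterwards.
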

\begin{proof}
	From \eqref{eq:ChangeVarsHeatDir} we have
	\begin{equation}\label{eq:ZEstimateH1HeatDir}
	\begin{array}{lll}
	&\left\|z(\cdot,t) \right\|_{H^1}\leq \left\|w(\cdot,t) \right\|_{H^1}+\left|u(t)\right|\left\|r(\cdot) \right\|_{H^1},\\
	&\left\|z(\cdot,t) -\hat{w}(\cdot,t)\right\|_{H^1}\leq \left\|w(\cdot,t) -\hat{w}(\cdot,t)\right\|_{H^1}\\
	&\hspace{30mm}+\left|u(t)\right|\left\|r(\cdot) \right\|_{H^1}.
	\end{array}
	\end{equation}
	From $u(0)=0$, \eqref{eq:WH1StabilityHeatDir} and \eqref{eq:ZEstimateH1HeatDir},  we obtain \eqref{eq:ZH1StabilityHeatDir}.
\end{proof}
\begin{remark}
Differently from \cite{katz2020constructiveDelay}, where Dirichlet actuation with non-local measurements were considered, we apply the
Cauchy-Schwarz inequality in \eqref{eq:WCrosTermNonDelayedHeatDir} with fractional powers of $\lambda_n$ which allows to compensate $\zeta$
by using \eqref{eq:zetaestHeatDir} in the Lyapunov analysis. Note that for finite-dimensional observer-based control of the 1D linear Kuramoto-Sivashinsky equation (KSE), studied in \cite{Rami_CDC20}, the use of fractional powers of the eigenvalues was not required. This is due to the faster growth rate of the eigenvalues corresponding to the fourth order spatial differential operator appearing in the KSE.
\end{remark}

\section{Sampled-data control of  heat equation}
Consider now sampled-data control of the 1D linear heat equation \eqref{eq:LinearHeat} under Dirichlet actuation \eqref{eq:BCsHeatDir}. We introduce two sequences of sampling instances. For the first sequence, let $0=s_0<s_1<\dots<s_k<\dots$, $\lim_{k\to \infty}s_k=\infty$ be the measurement sampling instances.
We  consider discrete-time in-domain point measurement
\begin{equation}\label{eq:InDomPointMeasHeatDelayed}
y(t) = z(x_*,s_k), \ x_*\in (0,1),\ t\in [s_k, s_{k+1}).
\end{equation}
 We assume that $s_{k+1}-s_k\leq \tau_{M,y}$ for all $k=0,1,\dots$ and some constants $\tau_{M,y}>0$.

  For the second sequence, let $0=t_0<t_1<\dots<t_j<\dots$, $\lim_{j\to \infty}t_j=\infty$ be the controller hold times.
   We assume that $t_{j+1}-t_j\leq \tau_{M,u}$ for all $j=0,1,\dots$ and some constant $\tau_{M,u}>0$.
The control signal $u(t)$ is generated by a \emph{generalized hold} function % a control input to be designed. We assume that $u(t)$ is a continuous piecewise linear function satisfying:
\begin{equation}\label{eq:uSampAssump}
\dot{u}(t) =v(t_j), \quad t\in [t_j,t_{j+1})
\end{equation}
where $\left\{v(t_j)\right\}_{j=1}^{\infty}$ are to be determined.  Furthermore, we choose $u(0)=0$. By a generalized hold we mean the following: 
%implementation of the proposed controller: 
given $v(t_j)$, the value of the control signal is computed  as  (see Figure \ref{fig:Network})
\begin{equation}
\label{GH}
u(t)=u(t_j)+v(t_j)(t-t_j),\ \ t\in[t_j,t_{j+1}), \ \ j=0,1,2,...\end{equation}
 %can be computed by
%holding the values $u(t_j)$ and $v(t_j)$ in memory and using $u(t)=u(t_j)+v(t_j)(t-t_j)$ (see Figure \ref{fig:Network}).
%
The considered sampled-data control may correspond also to a networked control system with two independent networks (where network-induced delays are negligible): from sensor to controller with transmission instances $s_k$ and from controller to actuator with transmission instances $t_j$. In this case, $t_j$ are also the
updating times of the generalized hold device on the actuator side.
\begin{figure}
	\centering
	\includegraphics[width=55mm,scale=0.11]{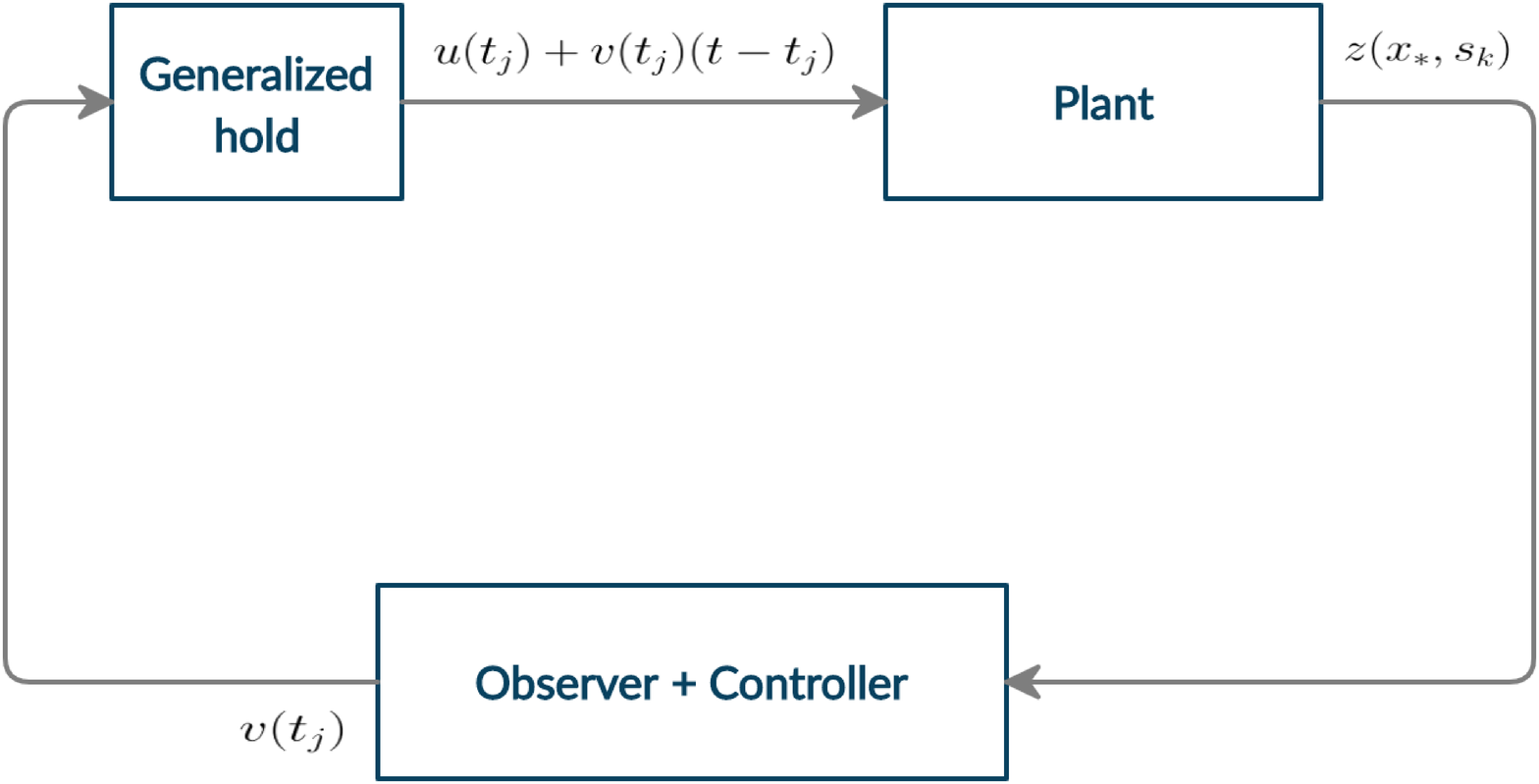}
	\caption{Network-based control with generalized hold.}\label{fig:Network}
\end{figure}
%We further consider delayed in-domain point measurement
%\begin{equation}\label{eq:InDomPointMeasHeatDelayed}
%y(t) = z(x_*,t-\tau_y), \ x_*\in (0,1).
%\end{equation}

By the time-delay approach to sampled-data control (see \cite{Fridman14_TDS}), the measurement and input delays are presented as
\begin{equation}\label{eq:tau_yu}
\begin{array}{lll}
\tau_y(t) = t-s_k,\quad t\in [s_k,s_{k+1}),\\
\tau_u(t) = t-t_j,\quad t\in [t_j,t_{j+1}).
\end{array}
\end{equation}
Henceforth the dependence of $\tau_y(t), \tau_u(t)$ on $t$ will be suppressed to shorten the notations.

Introducing the change of variables \eqref{eq:ChangeVarsHeatDir} we obtain the following ODE-PDE system
\begin{equation}\label{eq:PDE1HeatDirSamp}
\begin{aligned}
& w_t(x,t)=w_{xx}(x,t)+aw(x,t)\\
&\hspace{12mm}+ar(x)u(t)-r(x)v(t-\tau_u),\\
& \dot{u}(t)=v(t-\tau_u), \quad t\geq 0.
\end{aligned}
\end{equation}
with boundary conditions \eqref{eq:PDE1HeatDirBCs} and measurement
\begin{equation}\label{eq:InDomPointMeas1HeatDirSamp}
y(t) = w(x_*,t-\tau_y)+r(x_*)u(t-\tau_y).
\end{equation}
Recall that we treat $u(t)$ as an additional state variable and $v(t-\tau_u)$ as the control input to be determined.

We present the solution to \eqref{eq:PDE1HeatDirSamp} as \eqref{eq:WseriesHeatDir} with $\left\{\phi_n\right\}_{n=1}^{\infty}$ defined in \eqref{eq:SLBCs}. By differentiating under the integral sign, integrating by parts and using \eqref{eq:SL} and \eqref{eq:2BCs} we obtain
\begin{equation}\label{eq:WOdesHeatDirSamp}
\begin{array}{lll}
&\hspace{-4mm}\dot{w}_n(t) = (-\lambda_n+a)w_n(t) + ab_nu(t) -b_nv(t-\tau_u), \ t\geq 0
\end{array}
\end{equation}
with $\left\{b_n\right\}_{n=1}^{\infty}$ given in \eqref{eq:WOdesHeatDir}. In particular, \eqref{eq:AssbnNonDelayedHeatDir} and \eqref{eq:AssbnNonDelayed1HeatDir} hold.

%Let $\delta_0>\delta_1>0$ and denote $\delta = \delta_0-\delta_1$. 
Given $\delta>0$, let $N_0 \in \mathbb{N}$ satisfy \eqref{eq:N0HeatDir} and $N\in \mathbb{N}, \ N_0\leq N$. $N_0$ will define the dimension of the controller, whereas $N$ will define the dimension of the observer.
We construct a finite-dimensional observer of the form \eqref{eq:WhatSeriesHeatDir} where $\hat{w}_n(t)$ satisfy the ODEs for $t\geq 0$
\begin{equation}\label{eq:WobsODENonDelayedHeatDir1Delay}
\begin{array}{lll}
&\dot{\hat{w}}_n(t) = (-\lambda_n+a)\hat{w}_n(t) +ab_nu(t)- b_nv(t-\tau_u)\\
&\hspace{8mm}-l_n\left[\hat{w}(x_*,t-\tau_y)+r(x_*)u(t-\tau_y)- y(t)\right],\\
&\hat{w}_n(0)=0, \ n\geq 1
\end{array}
\end{equation}
with $y(t)$ in \eqref{eq:InDomPointMeas1HeatDirSamp} and scalar observer gains $l_n,\ 1\leq n\leq N$.

Under Assumption 1 let the observer and controller gains, $L_0$ and $K_0$ , satisfy \eqref{eq:GainsDesignLHeatDir} and \eqref{eq:GainsDesignKHeatDir}, respectively. We choose $l_n=0$ for $N_0+1\leq n \leq N$.
We propose a $(N_0+1)$-dimensional controller of the form
\begin{equation}\label{eq:WContDefHeatDirDelay}
\begin{aligned}
& v(t-\tau_u)= K_0\hat{w}^{N_0}(t-\tau_u)
\end{aligned}
\end{equation}
with $\hat{w}^{N_0}(t)$ defined in \eqref{eq:WContDefHeatDir}. The proposed controller is based on the $N$-dimensional observer \eqref{eq:WhatSeriesHeatDir}.

Well-posedness of the closed-loop system \eqref{eq:PDE1HeatDirSamp} and \eqref{eq:WobsODENonDelayedHeatDir1Delay} with control input \eqref{eq:WContDefHeatDirDelay} follows from arguments of \cite{RamiContructiveFiniteDim}, together with the step method (i.e proving well-posedness step-by-step between consecutive sampling instances). Thus, the closed-loop system \eqref{eq:PDE1HeatDirSamp} and \eqref{eq:WobsODENonDelayedHeatDir1Delay} with control input \eqref{eq:WContDefHeatDirDelay} and initial condition $w(\cdot,0)\in \mathcal{D}\left(\mathcal{A}_1^{\frac{1}{2}}\right)$ has a unique solution
\begin{equation}\label{eq:Classical1Samp}
\begin{array}{lll}
&\xi \in C\left([0,\infty);\mathcal{H}\right)\cap C^1\left((0,\infty)\setminus \mathcal{J};\mathcal{H}\right), \\ &\mathcal{J}=\left\{t_j\right\}_{j=1}^{\infty}\cup\left\{s_k\right\}_{k=1}^{\infty}
\end{array}
\end{equation}
satisfying \eqref{eq:Classical2}.

Recall the estimation error $e_n(t)$ defined in \eqref{eq:WEstErrorNonDelayed}. By using \eqref{eq:WseriesHeatDir}, \eqref{eq:WhatSeriesHeatDir} and arguments similar to \eqref{eq:WIntroZetaNonDelayedHeatDir} the last term on the right-hand side of \eqref{eq:WobsODENonDelayedHeatDir1Delay} can be written as
\begin{equation}\label{eq:WIntroZetaNonDelayedHeatDirDelay}
\begin{array}{ll}
&\hat{w}(x_*,t)+r(x_*)u(t) - y(t) \\
&=-\sum_{n=1}^{N} c_ne_n(t-\tau_y)-\zeta(t-\tau_y)
\end{array}
\end{equation}
where $\zeta(t)$ is defined in \eqref{eq:zetaintegralHeatDir} and satisfies \eqref{eq:zetaestHeatDir}. Then the error equations have the form
\begin{equation}\label{eq:WenHeatDirDelay}
\begin{array}{ll}
&\dot e_n(t)=(-\lambda_n+a)e_n(t)-l_n\left(\sum_{n=1}^{N} c_ne_n(t-\tau_y)\right.\\
&\hspace{8mm}\left.+\zeta(t-\tau_y)\right),\quad t\geq 0.
\end{array}
\end{equation}
Recalling the notations \eqref{eq:ErrDefNonDelayedHeatDir} and \eqref{eq:ErrDefNonDelayedHeatDir1} we define
\begin{equation}\label{eq:ClosedloopMatDelay}
\begin{array}{lll}
& \Upsilon_y(t) = X(t-\tau_y)-X(t),\ \Upsilon_u(t) = X(t-\tau_u)-X(t),\\
&F_1 = \mathcal{L}\cdot [0,C_0,0,C_1]\in \mathbb{R}^{(2N+1)\times (2N+1)}, \\
&\mathcal{B}= \text{col}\left\{-\tilde{B}_0, 0,B_1,0 \right\}\in \mathbb{R}^{2N+1},\\
&\hat{K}_0 = \begin{bmatrix} K_0,&0\end{bmatrix} \in \mathbb{R}^{1\times 2N+1} .
\end{array}
\end{equation}
Then, using the notations \eqref{eq:ErrDefNonDelayedHeatDir}, \eqref{eq:ErrDefNonDelayedHeatDir1}  and  \eqref{eq:WOdesHeatDirSamp}, \eqref{eq:WobsODENonDelayedHeatDir1Delay}, \eqref{eq:WenHeatDirDelay}, \eqref{eq:ClosedloopMatDelay} we arrive at the following closed-loop system:
\begin{equation}\label{eq:ClosedLoopHeatDirDelay}
\begin{array}{lll}
&\dot{X}(t) = FX(t)+F_1\Upsilon_y(t)-\mathcal{B}\hat{K}_0\Upsilon_u(t)+\mathcal{L}\zeta(t-\tau_y),\\
& \dot{w}_n(t) = (-\lambda_n+a)w_n(t) -b_n\tilde{K}_0X(t)\\
&\hspace{10mm}-b_n\hat{K}_0\Upsilon_u(t), \ n>N, \quad t\geq 0.
\end{array}
\end{equation}
For $H^1$-stability analysis of the closed-loop system \eqref{eq:ClosedLoopHeatDirDelay} we fix  $\delta_0>\delta$ %and denote $\delta = \delta_0-\delta_1$
and define the Lyapunov functional
\begin{equation}\label{eq:VNonDelayedHeatDirDelay}
W(t)=V(t)+V_y(t)+V_u(t), \quad t\geq 0
\end{equation}
where $V(t)$ is defined in \eqref{eq:VNonDelayedHeatDir1} and
\begin{equation}\label{eq:DelayFunctionals}
\begin{array}{lll}
&V_y(t) = \tau_{M,y}^2\int_{t-\tau_y}^te^{-2\delta_0(t-s)}\dot{X}^T(s)W_1\dot{X}(s)ds\\
&- \frac{\pi^2}{4}\int_{t-\tau_y}^te^{-2\delta_0(t-s)}\Upsilon_y(s)^TW_1\Upsilon_y(s)ds, \ W_1>0,\vspace{0.1cm}\\
&V_u(t) = \tau_{M,u}^2\int_{t-\tau_u}^te^{-2\delta_0(t-s)}\dot{X}^T(s)\hat{K}_0^TW_2\hat{K}_0\dot{X}(s)ds\\
& - \frac{\pi^2}{4}\int_{t-\tau_u}^te^{-2\delta_0(t-s)}\Upsilon_u(s)^T\hat{K}_0^TW_2\hat{K}_0\Upsilon_u(s)ds, \ W_2>0.
\end{array}
\end{equation}
Note that $V_y(t),V_u(t)\geq 0$ due to the exponential Wirtinger inequality (see e.g \cite{selivanov2016observer}). We will employ the following Halanay's inequality for piecewise-continuous Lyapunov functions that do not grow at points of jump-discontinuity:
\begin{lemma}[Halanay's inequality]\label{prop:HalInterval}
	Let $s_0<...<s_k<...$ satisfy $\lim_{k\to\infty}s_k=\infty$ and $s_{k+1}-s_k\le h,\ k=0,1,\dots$.
	For any $k=0,1,\dots$, let there exist $\left\{t^{(k)}_j\right\}_{0\leq j\leq n_k}$ satisfying
	\begin{equation}\label{eq:RefinementSamp}
	s_k=t^{(k)}_0<t^{(k)}_1<\dots<t^{(k)}_{n_k-1}<t^{(k)}_{n_k}=s_{k+1}.
	\end{equation}
	Let $W:[s_0,+\infty)\rightarrow
	\R_+$ be absolutely continuous on $[t^{(k)}_j,t^{(k)}_{j+1})$ for all $k=0,1,\dots$ and $ 0\leq j\leq n_k$. Assume further that $W(t)$ satisfies
	\begin{equation}\label{eq:NojumpSamp2}
	\lim_{t\uparrow t^{(k)}_j}W(t)\geq W\left(t^{(k)}_j\right), \quad k=0,1,\dots, \ 0\leq j\leq n_k.
	\end{equation}
	and for $\delta_0>\delta_1>0$ and all $k=0,1,2,\dots$
	\begin{equation}
	\begin{array}{lll}\label{HalanayRAESamp1}
	\dot{W}(t) &\leq -2\delta_0{W}(t)+2\delta_1\sup_{s_k\leq{\theta}\leq
		t}W(\theta)\vspace{0,1cm}\\
	&{\mbox almost \ for \ all} \ t\in [t^{(k)}_j,t^{(k)}_{j+1}), \  0\leq j\leq n_k-1.
	\end{array}
	\end{equation}
	Then
	\begin{equation}
	\begin{array}{rr}\label{HalanayISSkDPlus}
	W(t) \le e^{-2\delta_{\tau} (t-s_0)} W(s_0), \quad t\geq s_0.
	\end{array}
	\end{equation}
	where  $\delta_{\tau}>0$ is a unique solution of
	\begin{equation}\label{eq:deltatau}
	\delta_{\tau}=\delta_0-\delta_1e^{2\delta_{\tau} h}.
	\end{equation}
\end{lemma}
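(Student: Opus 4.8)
The plan is to prove Lemma~\ref{prop:HalInterval} by a comparison argument: I will show that $W$ stays below the decaying barrier $y(t)=W(s_0)e^{-2\delta_\tau(t-s_0)}$. The two features that make this work, and that must be handled with care, are that the supremum in \eqref{HalanayRAESamp1} is taken only over the \emph{current} block $[s_k,t]$ (not over the entire past), and that by \eqref{eq:NojumpSamp2} $W$ never increases at a point of discontinuity, so a first crossing of the barrier can only occur on an absolutely-continuous piece.

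First I would check that \eqref{eq:deltatau} is well posed: $g(\delta):=\delta_0-\delta-\delta_1e^{2\delta h}$ is continuous, strictly decreasing, $g(0)=\delta_0-\delta_1>0$ and $g(\delta_0)=-\delta_1e^{2\delta_0h}<0$, so there is a unique root $\delta_\tau\in(0,\delta_0)$. Set $y(t)=W(s_0)e^{-2\delta_\tau(t-s_0)}$. Since $y$ is decreasing, for $t\in[s_k,s_{k+1})$ we have $\sup_{s_k\le\theta\le t}y(\theta)=y(s_k)=e^{2\delta_\tau(t-s_k)}y(t)\le e^{2\delta_\tau h}y(t)$ because $t-s_k<s_{k+1}-s_k\le h$; using $\delta_1e^{2\delta_\tau h}=\delta_0-\delta_\tau$ (from \eqref{eq:deltatau}) this gives $-2\delta_0 y(t)+2\delta_1\sup_{s_k\le\theta\le t}y(\theta)\le-2\delta_\tau y(t)=\dot y(t)$, i.e.\ $y$ is a super-solution of \eqref{HalanayRAESamp1} on each block, with $W(s_0)=y(s_0)$.

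Next I would let $A=\{T\ge s_0:\ W(t)\le y(t)\text{ for all }t\in[s_0,T]\}$, an interval with left endpoint $s_0$ (with $s_0\in A$), and $T^*=\sup A$; suppose, for contradiction, $T^*<\infty$. Then $T^*\in A$: trivially if $T^*=s_0$, and otherwise because $W$ is absolutely continuous on each subinterval of \eqref{eq:RefinementSamp}, so $W(T^{*-}):=\lim_{t\uparrow T^*}W(t)$ exists, $W(T^{*-})\le y(T^*)$ by passing to the limit in $W\le y$ on $[s_0,T^*)$, and $W(T^*)\le W(T^{*-})$ by \eqref{eq:NojumpSamp2}. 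Let $T^*\in[s_k,s_{k+1})$ and choose $\sigma>0$ small enough that $[T^*,T^*+\sigma)$ lies in a single subinterval of \eqref{eq:RefinementSamp} and $\delta_1\sigma<1$; then $W$, hence $\psi:=W-y$, is absolutely continuous on $[T^*,T^*+\sigma)$. If $W(T^*)<y(T^*)$, continuity gives $W<y$ on $[T^*,T^*+\sigma']$ for some $\sigma'>0$, contradicting $T^*=\sup A$. If $W(T^*)=y(T^*)$, let $s_1$ maximize $\psi$ over $[T^*,T^*+\sigma/2]$; were $\psi(s_1)>0$ (so $s_1>T^*$), with $s_2$ the last zero of $\psi$ in $[T^*,s_1]$ (so $\psi>0$ on $(s_2,s_1]$ and $\psi(s_2)=0$), then for $s\in(s_2,s_1]$, splitting $[s_k,s]=[s_k,T^*]\cup[T^*,s]$ and using $W\le y\le y(s_k)$ on the first part, $W=y+\psi\le y(s_k)+\psi(s_1)$ on the second, gives $\sup_{s_k\le\theta\le s}W(\theta)\le y(s_k)+\psi(s_1)$; feeding this into \eqref{HalanayRAESamp1}, together with $y(s_k)\le e^{2\delta_\tau h}y(s)$, $\delta_1e^{2\delta_\tau h}=\delta_0-\delta_\tau$, $\dot y=-2\delta_\tau y$ and $\psi(s)\ge0$, yields $\dot\psi(s)\le 2\delta_1\psi(s_1)$ a.e.\ on $(s_2,s_1)$. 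Integrating over $[s_2,s_1]$ with $\psi(s_2)=0$ and $s_1-s_2\le\sigma/2$ gives $\psi(s_1)\le 2\delta_1(s_1-s_2)\psi(s_1)<\psi(s_1)$, which is impossible; hence $\psi\le0$ on $[T^*,T^*+\sigma/2]$, so $T^*+\sigma/2\in A$, again contradicting $T^*=\sup A$. Therefore $A=[s_0,\infty)$, which is \eqref{HalanayISSkDPlus}.

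I expect the main obstacle to be the discontinuity bookkeeping rather than the estimates: one must argue that a first exit above the barrier cannot be produced by a jump --- which is precisely where \eqref{eq:NojumpSamp2} enters, through $W(T^*)\le W(T^{*-})$ --- and, more delicately, that throughout the local analysis the supremum in \eqref{HalanayRAESamp1} is the one over the \emph{current} block $[s_k,t]$, so that the bound $t-s_k\le h$, and hence the rate $\delta_\tau$ from \eqref{eq:deltatau}, may legitimately be used (replacing it by a global supremum would ruin the estimate). The remaining ingredients --- solvability of \eqref{eq:deltatau}, the super-solution inequality, and the self-referential bound on $\sup_{s_k\le\theta\le s}W(\theta)$ --- are then routine.
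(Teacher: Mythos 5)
Your argument is correct: the well-posedness of \eqref{eq:deltatau}, the supersolution property of the barrier $y(t)=W(s_0)e^{-2\delta_{\tau}(t-s_0)}$ (using $t-s_k\le h$ on each block), the use of \eqref{eq:NojumpSamp2} to exclude a first crossing by a jump, and the local contradiction estimate $\psi(s_1)\le 2\delta_1(s_1-s_2)\psi(s_1)$ on a single subinterval are all sound. The paper omits its own proof, saying only that it is based on a comparison principle as for the classical Halanay inequality, which is precisely the route you have carried out in detail (only cosmetic point: rename your local points $s_1,s_2$, which clash with the sampling instants).
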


The proof of Lemma is omitted due to the length limitation.
As the classical Halanay inequality (see \cite{Fridman14_TDS}), the proof of Lemma \ref{prop:HalInterval} is based on a comparison principle, where \eqref{HalanayRAESamp1} is taken into account.

%Note that for $W$ defined by \eqref{eq:VNonDelayedHeatDirDelay}, \eqref{eq:DelayFunctionals}, the condition \eqref{eq:NojumpSamp2} holds.
Consider $[s_k,s_{k+1}), \ k=0,1,\dots$, where  $s_k,s_{k+1}$ are consecutive measurement sampling instances. Since the controller update instances satisfy $\lim_{j \to \infty}t_j = \infty$, there exist \emph{finitely many} controller update instances $t^{(k)}_j, \ 0\leq j\leq n_k$ for which \eqref{eq:RefinementSamp} holds. Furthremore, it can be easily verified that $W(t)$ defined by \eqref{eq:VNonDelayedHeatDirDelay},\eqref{eq:DelayFunctionals} is continuously differentiable on $[t^{(k)}_j,t^{(k)}_{j+1}), \ 0\leq j\leq n_k-1$ and satisfies \eqref{eq:NojumpSamp2}. Our goal now is to derive conditions which guarantee that \eqref{HalanayRAESamp1} holds.
Differentiating $V(t)$ on $[t^{(k)}_j,t^{(k)}_{j+1}), \ 0\leq j\leq n_k-1$ along the solution to \eqref{eq:ClosedLoopHeatDirDelay} we obtain
\begin{equation}\label{eq:WStabAnalysisNonDelayedHeatDirDelay}
\begin{array}{lll}
&\dot{V}+2\delta_0 V \leq X^T(t)\left[PF +F^TP+2\delta_0 P\right]X(t)\\
&+2X^T(t)PF_1\Upsilon_y(t)-2X^T(t)P\mathcal{B}\hat{K}_0\Upsilon_u(t)\\
&+2X^T(t)P\mathcal{L}\zeta(t-\tau_y)\\
&+2\sum_{n=N+1}^{\infty}\left(-\lambda_n^2+(a+\delta_0)\lambda_n\right)w_n^2(t)\\ &+2\sum_{n=N+1}^{\infty}\lambda_n w_n(t)b_n\left[-\tilde{K}_0X(t)-\hat{K}_0\Upsilon_u(t)\right].
\end{array}
\end{equation}
By arguments similar to \eqref{eq:WCrosTermNonDelayedHeatDir}
\begin{equation}\label{eq:WCrosTermNonDelayedHeatDirDelay}
\begin{array}{lllll}
&2\sum_{n=N+1}^{\infty}\lambda_n w_n(t)b_n\left[-\tilde{K}_0X(t)-\hat{K}_0\Upsilon_u(t)\right]\\
&\leq \left(\frac{1}{\alpha_1}+\frac{1}{\alpha_2}\right) \sum_{n=N+1}^{\infty}\lambda_n^{\frac{7}{4}} w_n^2(t)+ \frac{4\alpha_1 }{\sqrt{N}\pi^{\frac{3}{2}}}\left|\tilde{K}_0X(t)\right|^2\\
&+ \frac{4\alpha_2 }{\sqrt{N}\pi^{\frac{3}{2}}}\left|\hat{K}_0\Upsilon_u(t)\right|^2
\end{array}
\end{equation}
where $\alpha_i>0, \ i\in \left\{1,2\right\}$. Differentiating $V_y(t)$ and $V_u(t)$ along the solution to \eqref{eq:ClosedLoopHeatDirDelay} we obtain
\begin{equation}\label{eq:VyDot}
\begin{array}{lll}
&\dot{V}_y+2\delta_0V_y = \tau_{M,y}^2e^{2\delta_0\tau_{M,y}}\dot{X}^T(t)W_1\dot{X}(t)\\
&\hspace{17mm} -\frac{\pi^2}{4}\Upsilon_y(t)^TW_1\Upsilon_y(t),\\
&\dot{V}_u+2\delta_0V_u = \tau_{M,u}^2e^{2\delta_0\tau_{M,u}}\dot{X}^T(t)\hat{K}_0^TW_2\hat{K}_0\dot{X}(t)\\
&\hspace{17mm} -\frac{\pi^2}{4}\Upsilon_u(t)^T\hat{K}_0^TW_2\hat{K}_0\Upsilon_u(t).
\end{array}
\end{equation}

To compensate $\zeta(t-\tau_y)$ we use the following estimate:
\begin{equation}\label{eq:zetaintroducLMIsDelay}
\begin{array}{lll}
&\hspace{-4mm}-2\delta_1 \sup_{s_k\leq\theta \leq t}W(\theta)\leq -2\delta_1 V(s_k)\overset{\eqref{eq:tau_yu}}{\leq} -2\delta_1 V(t-\tau_y)\vspace{0.1cm}\\
&\hspace{-4mm}\overset{\eqref{eq:zetaestHeatDir}}{\leq} -2\delta_1X^T(t)PX(t)-2\delta_1\Upsilon_y^T(t)P\Upsilon_y(t)-2\delta_1\zeta^2(t-\tau_y)\vspace{0.1cm}\\
&\hspace{-4mm}-2\delta_1X^T(t)P\Upsilon_y(t)-2\delta_1\Upsilon_y^T(t)PX(t)
\end{array}
\end{equation}
where $\delta_0>\delta_1>0$. Let
\begin{equation*}
\begin{array}{lll}
&\eta(t)= \text{col}\left\{X(t),\zeta(t-\tau_y),\Upsilon_y(t),\hat{K}_0\Upsilon_u(t)\right\},\\
& R = [F, \mathcal{L},F_1,-\mathcal{B}].
\end{array}
\end{equation*}
From \eqref{eq:WStabAnalysisNonDelayedHeatDirDelay}, \eqref{eq:WCrosTermNonDelayedHeatDirDelay}, \eqref{eq:VyDot} and \eqref{eq:zetaintroducLMIsDelay} we have
\begin{equation}\label{eq:WStabResultNonDelayedHeatDirDelay}
\begin{array}{ll}
\mathcal{H}_W &= \dot{W}(t)+2\delta_0 W(t)-2\delta_1 \sup_{s_k\leq \theta \leq t}W(\theta)\\
& \leq \eta^T(t)\Psi^{(2)}\eta(t)+2\sum_{n=N+1}^{\infty}\mu_n\lambda_n w_n^2(t)\leq 0
\end{array}
\end{equation}
provided $\mu_{n} = -\lambda_n+\left[\sum_{i=1}^2 \frac{1}{2\alpha_i}\right]\lambda_n^{\frac{3}{4}}+a+\delta_0< 0$ for $n>N$ and
\begin{equation}\label{eq:PSi2def}
\begin{array}{lll}
\Psi^{(2)}&=\small\left[
\begin{array}{cc|cccc}
\Phi^{(1)}\ & P\mathcal{L}\ & P(F_1-2\delta_1I)\ & -P\mathcal{B} \\
* \ & -2\delta_1 \ & 0 \ & 0 \\ \hline
* \ & * \ & -\overline{W}_1\ & 0 \\
* \ & * \ & * \ & -\overline{W}_2
\end{array}
\right]\\
&+R^T\left(\varepsilon_y W_1+\varepsilon_u \hat{K}_0^TW_2\hat{K}_0\right)R<0
\end{array}
\end{equation}
where $\Phi^{(1)}$ is defined in \eqref{eq:WStabResultNonDelayedHeatDir1} and
\begin{equation*}
\begin{array}{lll}
&\overline{W}_1 = 2\delta_1P+\frac{\pi^2}{4}W_1, \overline{W}_2 = \frac{\pi^2}{4}W_2-\frac{4\alpha_2 }{\sqrt{N}\pi^{\frac{3}{2}}}, \\
& \varepsilon_y=\tau_{M,y}^2e^{2\delta_0 \tau_{M,y}},\ \ \varepsilon_u=\tau_{M,u}^2e^{2\delta_0 \tau_{M,u}}.
\end{array}
\end{equation*}
Furthermore, monotonicity of $\left\{\lambda_n\right\}_{n=1}^{\infty}$ and Schur complement imply that $\mu_n< 0$ for all $n>N$ if and only if
\begin{eqnarray}\label{eq:tailDelay}
\hspace{-2mm}\scriptsize\left[
\begin{array}{c|cc}
-\lambda_{N+1}+a+\delta_0 \ & 1 \ & 1 \\ \hline
* \ & -2\alpha_1\lambda_{N+1}^{-\frac{3}{4}} \ & 0 \\
* \ & * \ & -2\alpha_2\lambda_{N+1}^{-\frac{3}{4}}
\end{array}
\right]<0.
\end{eqnarray}
From \eqref{eq:WStabResultNonDelayedHeatDirDelay}, the LMIS \eqref{eq:PSi2def} and \eqref{eq:tailDelay} result in $\mathcal{H}_W\leq 0$ for $t\in [t^{(k)}_j,t^{(k)}_{j+1}), \ 0\leq j\leq n_k-1$. From \eqref{HalanayISSkDPlus} and \eqref{eq:deltatau} with $h=\tau_{M,y}$ we arrive at
\begin{equation}\label{eq:HalInterval}
W(t) \le e^{-2\delta_{\tau}t}W(0), \quad  t\geq 0,
\end{equation}
Summarizing, we have:
\begin{theorem}\label{Thm:WdynExtensionHeatDirDelay}
Consider \eqref{eq:PDE1HeatDirSamp} with boundary conditions \eqref{eq:PDE1HeatDirBCs}, in-domain point measurement \eqref{eq:InDomPointMeas1HeatDirSamp}, control law \eqref{eq:WContDefHeatDirDelay} and $w(\cdot,0)\in \mathcal{D}(\mathcal{A}_1^{\frac{1}{2}})$. Given $\delta>0$, let $N_0\in \mathbb{N}$ satisfy \eqref{eq:N0HeatDir} and $N\in \mathbb{N}$ satisfy $N_0\leq N$. Let $L_0$ and $K_0$ be obtained using \eqref{eq:GainsDesignLHeatDir}  and \eqref{eq:GainsDesignKHeatDir}, respectively. Given $\tau_{M,y},\tau_{M,u}>0$, $\delta_1>0$ and $\delta_0=\delta_1+\delta$, let there exist positive definite matrices $P,W_1\in \mathbb{R}^{(2N+1)\times (2N+1)}$ and scalars $\alpha_1,\alpha_2,W_2>0$ which satisfy \eqref{eq:PSi2def} and \eqref{eq:tailDelay}. Then the solution $w(x,t)$ and $u(t)$ to \eqref{eq:PDE1HeatDirSamp} under the control law \eqref{eq:WContDefHeatDirDelay}, \eqref{eq:WobsODENonDelayedHeatDir1Delay} and the corresponding observer $\hat{w}(x,t)$ defined by \eqref{eq:WhatSeriesHeatDir} satisfy \eqref{eq:WH1StabilityHeatDir} with $\delta$ replaced by $\delta_{\tau}$, given in \eqref{eq:deltatau}. The LMIS
\eqref{eq:PSi2def} and \eqref{eq:tailDelay} are always feasible for large enough $N$ and small enough $\tau_{M,y},\tau_{M,u}$.
\end{theorem}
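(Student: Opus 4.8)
The plan is to establish Theorem \ref{Thm:WdynExtensionHeatDirDelay} in two parts: first deriving the exponential estimate \eqref{eq:WH1StabilityHeatDir} (with $\delta$ replaced by $\delta_\tau$) from the LMIs \eqref{eq:PSi2def} and \eqref{eq:tailDelay} via the Halanay machinery already set up, and second showing asymptotic feasibility of these LMIs for large $N$ and small sampling intervals. For the stability estimate, essentially all the work has been done in the lead-up: the estimate \eqref{eq:HalInterval}, namely $W(t)\le e^{-2\delta_\tau t}W(0)$, follows from applying Lemma \ref{prop:HalInterval} on each measurement interval $[s_k,s_{k+1})$ with the refinement by controller-hold instants $t^{(k)}_j$, using that $W(t)$ is continuously differentiable on each $[t^{(k)}_j,t^{(k)}_{j+1})$, does not grow at the jump points (condition \eqref{eq:NojumpSamp2}), and satisfies the differential inequality \eqref{HalanayRAESamp1} with constants $\delta_0>\delta_1>0$ as guaranteed by $\mathcal{H}_W\le 0$. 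What remains is to convert \eqref{eq:HalInterval} into the $H^1$ bound exactly as in the proof of Theorem \ref{Thm:WdynExtensionHeatDir}: bound $W(0)=V(0)$ from above by $M_0\|w(\cdot,0)\|_{H^1}^2$ using \eqref{lem22} and $u(0)=0$ (the functionals $V_y(0),V_u(0)$ vanish since $\tau_y(0)=\tau_u(0)=0$), bound $V(t)\le W(t)$ from below by $\min$-type coefficients times $\|w(\cdot,t)\|_{H^1}^2+|u(t)|^2$ via Parseval, Wirtinger's inequality \eqref{eq:WirtingerHeatDir}, and monotonicity of $\lambda_n$, and finally decompose $w-\hat w=\sum_{n\le N}e_n\phi_n+\sum_{n>N}w_n\phi_n$ to get the observation-error estimate.

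\textbf{Feasibility for large $N$ and small $\tau_{M,y},\tau_{M,u}$.} The plan is to take $\tau_{M,y}=\tau_{M,u}=0$ first, so that $\varepsilon_y=\varepsilon_u=0$, $\Upsilon_y=\Upsilon_u=0$, and $\delta_0$ can be taken close to $\delta$ (take $\delta_1\to 0^+$). In this limit \eqref{eq:PSi2def} degenerates to the block containing $\Phi^{(1)}$ and $P\mathcal{L}$, which is precisely the LMI \eqref{eq:WStabResultNonDelayedHeatDir1} of the continuous-time case, known to be feasible for large $N$ by Theorem \ref{Thm:WdynExtensionHeatDir}; similarly \eqref{eq:tailDelay} reduces to a condition equivalent to $\mu_n<0$, which holds for large $N$ since $\lambda_{N+1}\approx \pi^2(N+1)^2$ dominates $\lambda_{N+1}^{3/4}$. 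Concretely, I would reuse the explicit choice from the proof of Theorem \ref{Thm:WdynExtensionHeatDir}: pick $P$ solving the Lyapunov equation \eqref{eq:LyapEqHeatDir} (so $|P|\le\Lambda_1 N^{1/4}$ by \eqref{eq:PnormHeatDir} and \eqref{eq:FEstimateHeatDir}), set $\alpha_1=\alpha_2=N^{-3/8}$, and choose $W_1=I$, $W_2$ a fixed constant large enough that $\overline W_2=\tfrac{\pi^2}{4}W_2-\tfrac{4\alpha_2}{\sqrt N\pi^{3/2}}>0$ uniformly in $N$. With these substitutions the strict inequalities \eqref{eq:PSi2def} and \eqref{eq:tailDelay} hold strictly for $N$ large and $\tau_{M,y},\tau_{M,u}=0$; by continuity of eigenvalues of a symmetric matrix in its entries, and since the dependence of $\Psi^{(2)}$ on $\tau_{M,y},\tau_{M,u}$ through $\varepsilon_y,\varepsilon_u$ is continuous with $\varepsilon_y,\varepsilon_u\to 0$ as $\tau_{M,y},\tau_{M,u}\to 0$, the same LMIs remain feasible for sufficiently small $\tau_{M,y},\tau_{M,u}>0$.

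\textbf{The main obstacle} is the interaction between the $N$-growth of $|P|$ and the newly added sampling terms $R^T(\varepsilon_yW_1+\varepsilon_u\hat K_0^TW_2\hat K_0)R$ in \eqref{eq:PSi2def}, together with the off-diagonal entry $P(F_1-2\delta_1 I)$ and the $2\delta_1 P$ appearing in $\overline W_1=2\delta_1P+\tfrac{\pi^2}{4}W_1$. One must check that these do not destroy the negativity margin obtained in the $\tau=0$, $\delta_1\to 0$ limit. The clean way around this is the order of quantifiers in the statement: first fix $N$ large (using only the continuous-time feasibility and the tail LMI), which fixes all matrix dimensions and the value of $|P|$; then, with $N$ fixed, choose $\delta_1$ small and $\tau_{M,y},\tau_{M,u}$ small so that the perturbation terms $2\delta_1 P$, $P(F_1-2\delta_1 I)$ beyond its $PF_1$ part, and $\varepsilon_yW_1+\varepsilon_u\hat K_0^TW_2\hat K_0$ are all small relative to the (now fixed, strictly negative) leading block — a standard continuity/perturbation argument for strict LMIs. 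A minor point to verify carefully is that $R=[F,\mathcal L,F_1,-\mathcal B]$ has norm bounded independently of the small parameters (it does, being independent of $\tau_{M,y},\tau_{M,u},\delta_1$), so the perturbation is genuinely $O(\varepsilon_y+\varepsilon_u+\delta_1)$ and can be absorbed.
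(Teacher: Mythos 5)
Your first half is fine and is exactly what the paper does: the estimate \eqref{eq:HalInterval} is obtained from Lemma \ref{prop:HalInterval} before the theorem, and the conversion to \eqref{eq:WH1StabilityHeatDir} with rate $\delta_\tau$ (using $V_y(0)=V_u(0)=0$ so $W(0)=V(0)$, $W\ge V$, and the same Parseval/Wirtinger sandwich as in Theorem \ref{Thm:WdynExtensionHeatDir}) is precisely the ``arguments identical to Theorem \ref{Thm:WdynExtensionHeatDir}'' invoked in the paper.

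The feasibility half, however, has a genuine gap. Your reduction rests on the claim that for $\tau_{M,y}=\tau_{M,u}=0$ and $\delta_1\to 0^+$ the LMI \eqref{eq:PSi2def} ``degenerates to precisely'' the continuous-time LMI \eqref{eq:WStabResultNonDelayedHeatDir1}. It does not: in \eqref{eq:WStabResultNonDelayedHeatDir1} the diagonal entry of the $\zeta$-channel is $-2(\lambda_{N+1}-a-\delta)+\alpha_1^{-1}\lambda_{N+1}^{3/4}$, of order $-N^2$, i.e.\ $\zeta$ is compensated by the series tail; in \eqref{eq:PSi2def} that entry is $-2\delta_1$, because the tail negativity has been moved into the separate condition \eqref{eq:tailDelay} and the only compensation of $\zeta(t-\tau_y)$ is the Halanay term \eqref{eq:zetaintroducLMIsDelay}. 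Hence $\delta_1$ is not a harmless small parameter: by Schur complement, \eqref{eq:PSi2def} requires (up to the $\varepsilon_y,\varepsilon_u$ terms) $\Phi^{(1)}+\tfrac{1}{2\delta_1}P\mathcal{L}\mathcal{L}^TP+P(F_1-2\delta_1 I)\overline{W}_1^{-1}(F_1-2\delta_1 I)^TP+\overline{W}_2^{-1}P\mathcal{B}\mathcal{B}^TP<0$, and letting $\delta_1\downarrow 0$ makes the second term blow up, so ``fix $N$ from the continuous-time case, then shrink $\delta_1$'' runs in the wrong direction. Moreover, with your concrete choices ($P$ from \eqref{eq:LyapEqHeatDir}, so $|P|\le\Lambda_1 N^{1/4}$ and a negativity margin of $\Phi^{(1)}$ of order $N^{-3/4}$; $W_1=I$; $W_2$ a fixed constant) the remaining Schur terms are not small for large $N$: the crude bound gives $\tfrac{1}{2\delta_1}|P\mathcal{L}|^2\lesssim N^{1/2}/\delta_1$, and since $|F_1|=|\mathcal{L}|\,\bigl(\sum_{n=1}^N c_n^2\bigr)^{1/2}\sim\sqrt{N}$, the term $PF_1\overline{W}_1^{-1}F_1^TP$ with $W_1=I$ is of order $N^{3/2}$; none of this is dominated by the $N^{-3/4}$ margin, and shrinking $\tau_{M,y},\tau_{M,u}$ only removes the $\varepsilon_y,\varepsilon_u$ terms, not these. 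A correct argument must let $W_1,W_2$ scale with $N$ (with the admissible $\tau_{M,y},\tau_{M,u}$ shrinking accordingly) and must estimate $P\mathcal{L}$ more finely than $|P||\mathcal{L}|$ (e.g.\ through a structured choice of $P$ adapted to the fact that only the first $2N_0+1$ entries of $\mathcal{L}$ are nonzero). This is precisely the content of the arguments of Theorem 3.1 in \cite{katz2020constructiveDelay} to which the paper defers, and it is the step your proposal does not supply.
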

\begin{proof}
The proof of \eqref{eq:WH1StabilityHeatDir} follows from arguments identical to Theorem \ref{Thm:WdynExtensionHeatDir}. The feasibility of \eqref{eq:PSi2def} and \eqref{eq:tailDelay} for large enough $N$ and small enough $\tau_{M,y},\tau_{M,u}$ follows from arguments similar to Theorem 3.1 in \cite{katz2020constructiveDelay}.
\end{proof}
\begin{corollary}\label{cor:H1ConvHeatDirDelay}
	Under the conditions of Theorem \ref{Thm:WdynExtensionHeatDirDelay} the estimates \eqref{eq:ZH1StabilityHeatDir} hold for $z(x,t)$ given in \eqref{eq:ChangeVarsHeatDir}.
\end{corollary}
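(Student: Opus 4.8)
The plan is to obtain the $H^1$-estimate from the already-proven bound \eqref{eq:HalInterval} on $W(t)$ exactly as Theorem~\ref{Thm:WdynExtensionHeatDir} was obtained from \eqref{eq:ComparisonDirichletHeatDir}, and to settle feasibility of \eqref{eq:PSi2def}--\eqref{eq:tailDelay} by a perturbation argument starting from $\tau_{M,y}=\tau_{M,u}=0$. For the stability estimate I would first note that $V_y,V_u\ge0$ by the exponential Wirtinger inequality, so $W(t)\ge V(t)$; and since $s_0=t_0=0$ we have $\tau_y(0)=\tau_u(0)=0$, so the integrals in \eqref{eq:DelayFunctionals} vanish at $t=0$, giving $W(0)=V(0)$. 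As $u(0)=0$ and $e_n(0)=w_n(0)$, the bound $V(0)\le M_0\|w_x(\cdot,0)\|^2\le M_0\|w(\cdot,0)\|^2_{H^1}$ of \eqref{eq:VZeroNonDelayedHeatDir1} is unchanged, so \eqref{eq:HalInterval} yields $V(t)\le M_0e^{-2\delta_\tau t}\|w(\cdot,0)\|^2_{H^1}$. For $t>0$ we have $w(\cdot,t)\in\mathcal{D}(\mathcal{A}_1)$ by \eqref{eq:Classical2}, so Parseval's equality, Wirtinger's inequality \eqref{eq:WirtingerHeatDir} and monotonicity of $\{\lambda_n\}$ give, exactly as in \eqref{eq:VBoundBelowHeatDirichlet}, $V(t)\ge c\,(\|w(\cdot,t)\|^2_{H^1}+|u(t)|^2)$ with $c>0$; combining the two bounds proves \eqref{eq:WH1StabilityHeatDir} with $\delta$ replaced by $\delta_\tau$. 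For the observer error I would use $w(\cdot,t)-\hat w(\cdot,t)=\sum_{n=1}^Ne_n(t)\phi_n+\sum_{n>N}w_n(t)\phi_n\in H^1_0(0,1)$, apply Wirtinger and Parseval to it, and bound $\sum_{n=1}^N\lambda_ne_n^2(t)\le\lambda_N|X(t)|^2\le\lambda_N\sigma_{\min}(P)^{-1}V(t)$ and $\sum_{n>N}\lambda_nw_n^2(t)\le V(t)$; Corollary~\ref{cor:H1ConvHeatDirDelay} then follows as Corollary~\ref{cor:H1ConvHeatDir} did, via \eqref{eq:ChangeVarsHeatDir} and $u(0)=0$.

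\emph{Feasibility.} The matrix in \eqref{eq:PSi2def} depends on the sampling bounds only through $\varepsilon_y=\tau_{M,y}^2e^{2\delta_0\tau_{M,y}}$ and $\varepsilon_u=\tau_{M,u}^2e^{2\delta_0\tau_{M,u}}$, which $\to0$ as $\tau_{M,y},\tau_{M,u}\to0$; since strict LMIs are open, it suffices to exhibit a strictly feasible tuple $(P,W_1,\alpha_1,\alpha_2,W_2)$ for \eqref{eq:PSi2def}--\eqref{eq:tailDelay} at $\varepsilon_y=\varepsilon_u=0$ with $N$ large, and then pick $\tau_{M,y},\tau_{M,u}$ small (possibly depending on $N$). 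Fix $\alpha_1=\alpha_2=1$; then \eqref{eq:tailDelay}, which by Schur complement reads $\lambda_{N+1}>a+\delta_0+\big(\tfrac1{2\alpha_1}+\tfrac1{2\alpha_2}\big)\lambda_{N+1}^{3/4}$, holds for large $N$ because $\lambda_{N+1}\sim\pi^2N^2$ dominates $\lambda_{N+1}^{3/4}$. The crucial point is that, since Halanay's term \eqref{eq:zetaintroducLMIsDelay} contributes exactly $-2\delta_1P$ to the $X$-block, the $(1,1)$ block of \eqref{eq:PSi2def} at $\varepsilon=0$ is precisely $\Phi^{(1)}=PF+F^TP+2\delta P+\tfrac{4\alpha_1}{\sqrt N\pi^{3/2}}\tilde K_0^T\tilde K_0$ with the \emph{design} rate $\delta$, so I may take $P$ as the solution of $P(F+\delta I)+(F+\delta I)^TP=-qI$ for a small $N$-independent $q>0$; this is well posed since $F+\delta I$ is Hurwitz by the choice \eqref{eq:GainsDesignLHeatDir}--\eqref{eq:GainsDesignKHeatDir} of the gains. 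Using the slow/fast block structure of $F$ behind \eqref{eq:FEstimateHeatDir} (the fast block $\operatorname{diag}\{A_1,A_1\}$ has eigenvalues $\sim-\pi^2N^2$ and couples into the $N$-independent slow block only through $O(\sqrt N)$ and $O(1)$ gains), the slow part of $P$ stays bounded as $N\to\infty$ while its fast part is $O(qN^{-2})$, whence $|P\mathcal{L}|,|P\mathcal{B}|=O(q)$ and $|P(F_1-2\delta_1I)|=O(q\sqrt N)$. Then $\Phi^{(1)}=-qI+O(N^{-1/2})<0$ for large $N$; choosing $W_1$ of order $N^2$ makes $\overline{W}_1=2\delta_1P+\tfrac{\pi^2}{4}W_1>0$ and $P(F_1-2\delta_1I)\overline{W}_1^{-1}(F_1-2\delta_1I)^TP=O(q^2/N)$, and choosing $W_2$ a large $N$-independent constant makes $\overline{W}_2=\tfrac{\pi^2}{4}W_2-\tfrac{4\alpha_2}{\sqrt N\pi^{3/2}}>0$ with $P\mathcal{B}\overline{W}_2^{-1}\mathcal{B}^TP$ as small as desired. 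Eliminating the last two block rows/columns by Schur complement reduces \eqref{eq:PSi2def} to $\Phi^{(1)}+\tfrac1{2\delta_1}P\mathcal{L}\mathcal{L}^TP<0$ up to those vanishing corrections, and since $\Phi^{(1)}\approx-qI$ and $|P\mathcal{L}|^2=O(q^2)$, taking $q$ small (in terms of $\delta_1$ and the $N$-independent slow data) closes it. Finally $\delta_\tau>0$ since \eqref{eq:deltatau} gives $\delta_\tau\to\delta_0-\delta_1=\delta>0$ as $\tau_{M,y}\to0$; this is the line of Theorem~3.1 in \cite{katz2020constructiveDelay}.

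\emph{Main obstacle.} The only delicate step is feasibility of \eqref{eq:PSi2def}. In Theorem~\ref{Thm:WdynExtensionHeatDir} the huge diagonal entry $-2(\lambda_{N+1}-a-\delta)+\tfrac1{\alpha_1}\lambda_{N+1}^{3/4}$ in the $\zeta$-slot absorbs $P\mathcal{L}\mathcal{L}^TP$ and the Cauchy--Schwarz parameter $\alpha_1$ is tuned against it, whereas in the sampled-data functional the $\zeta(t-\tau_y)$-slot carries only the fixed entry $-2\delta_1$ produced by Halanay's inequality, so one cannot lean on $\lambda_{N+1}$ there. The remedy is precisely the slow/fast decoupling of $F$ together with the freedom to inflate $W_1,W_2$: this keeps $P$, $P\mathcal{L}$, $P\mathcal{B}$ and $PF_1$ at a controlled $N$-dependence while $\alpha_1,\alpha_2$ remain $O(1)$ so that \eqref{eq:tailDelay} still holds — balancing these two requirements is essentially the whole difficulty, and it mirrors the corresponding step of \cite{katz2020constructiveDelay}. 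Turning \eqref{HalanayISSkDPlus} into a genuine state bound and checking $\delta_\tau>0$ are then routine given \eqref{eq:Classical2}.
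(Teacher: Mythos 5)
Your core step is exactly the paper's (implicit) proof: the corollary follows by applying the triangle inequality \eqref{eq:ZEstimateH1HeatDir} coming from the change of variables \eqref{eq:ChangeVarsHeatDir}, together with $u(0)=0$ and the estimate \eqref{eq:WH1StabilityHeatDir} (with $\delta$ replaced by $\delta_\tau$) guaranteed by Theorem~\ref{Thm:WdynExtensionHeatDirDelay}, just as Corollary~\ref{cor:H1ConvHeatDir} followed from Theorem~\ref{Thm:WdynExtensionHeatDir}. The remainder of your write-up (re-deriving the state bound from \eqref{eq:HalInterval} and sketching feasibility of \eqref{eq:PSi2def}--\eqref{eq:tailDelay}) re-proves parts of Theorem~\ref{Thm:WdynExtensionHeatDirDelay} itself and is unnecessary for the corollary, though its outline is consistent with the paper's arguments.
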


\section{Numerical examples}
We demonstrate our approach to Dirichlet control of a 1D linear heat equation in two cases - non-delayed boundary control and sampled data boundary control. In both cases we choose $a=10$, which results in an unstable open-loop system. Furthermore, the gains $L_0$ and $K_0$ are found from \eqref{eq:GainsDesignLHeatDir} and \eqref{eq:GainsDesignKHeatDir}, respectively and are given by
\begin{equation}\label{eq:Gains}
L_0 = 0.7062, \quad K_0 = \begin{bmatrix}
-4.8237 & -5.2287
\end{bmatrix}.
\end{equation}
The LMIs of Thereoms \ref{Thm:WdynExtensionHeatDir} and \ref{Thm:WdynExtensionHeatDirDelay} were verified using the standard Matlab LMI toolbox. For non-delayed boundary control, we choose $\delta = 0.1$, which leads to $N_0=1$. The LMIs of Theorem \ref{Thm:WdynExtensionHeatDir} were found to be feasible for $N=4$. For sampled data boundary control we consider $\delta_{\tau}<\delta_0-\delta_1=0.1$, which leads to $N_0=1$. For $N\in \left\{6,8,10,12,14\right\}$ and $\delta_0=6$, the LMIs of Theorem \ref{Thm:WdynExtensionHeatDir} were verified in order to find the maximum values of $\tau_{M,y}$ and $\tau_{M,u}$ which result in feasibility. The results are presented in Table \ref{Tab:Sim}. It can be seen from Table \ref{Tab:Sim} that there is a trade-off between $\tau_{M,y}$ and $\tau_{M,u}$. In particular, increasing $\tau_{M,y}$ by $10^{-3}$ decreases $\tau_{M,u}$ by more than this amount. Furthermore, increasing $N$ preserves feasibility of the LMIs while increasing both $\tau_{M,y}$ and $\tau_{M,u}$. Numerical simulations of the closed-loop system for both cases confirm the theoretical results. The details are omitted due to length limitations.
\begin{table}[h]
	\begin{center}
		\footnotesize\begin{tabular}{|l|l|l|l|l|l|}
			\hline
			& N=6 & N=8 & N=10 & N=12& N=14  \\
			\hline
			$\tau_{M,y}$ &
			\multicolumn{5}{c|}{$\tau_{M,u}$}  \\
			\hline
			$0.002$ & 0.048 & 0.051 & 0.052 &0.053& 0.055\\
			\hline
			$0.004$ & 0.044 & 0.047 & 0.05 &0.051& 0.053\\
			\hline
			$0.006$ & 0.036 & 0.041 & 0.044 &0.047& 0.049\\
			\hline
			$0.008$ & 0.029 & 0.035 & 0.038 &0.041& 0.042\\
			\hline
			$0.01$ & 0.021 & 0.028 & 0.031 &0.034& 0.036\\
			\hline
			$0.012$ & 0.008 & 0.019 & 0.024 &0.027& 0.029\\
			\hline
			$0.014$ & - & 0.01 & 0.015 &0.018& 0.021\\
			\hline
			$0.016$ & - & - & 0.005 &0.009& 0.012\\
			\hline
		\end{tabular}
	\end{center}
	\caption{\label{Tab:Sim} Maximum value of $\tau_{M,u}$ for different values of $N$ and $\tau_{M,y}$.}
\end{table}
\section{Conclusions}
This paper presented the first constructive LMI-based method for finite-dimensional boundary controller design under the point in-domain measurement for 1D heat equation.
The method was based
on modal decomposition approach via dynamic extension.
Sampled-data implementation of the controller under sampled-data measurements was presented.
%The suggested direct
%Lyapunov approach led to LMIs for
%finding  the resulting exponential decay rate and the observer dimension.
The proposed method can be extended to other PDEs and to input-to-state stabilization.
%\huge $z(x_*,s_k)\qquad v(t_j) \qquad u(t_j)+v(t_j)(t-t_j)$

%\section{\textcolor{red}{TO DO LIST:}}
%\begin{enumerate}
%	\item EXAMPLES.
%	\item INTRODUCTION.
%	\item ABSTRACT.
%	\item REMARK ABOUT DELAY
%\end{enumerate}

%\bibliographystyle{IEEEabrv}
\bibliographystyle{IEEEtran}
\bibliography{IEEEabrv,Bibliography021218}

\end{document}